\DeclareMathOperator{\Exp}{exp}
\DeclareMathOperator{\Exps}{e}
\DeclareMathOperator{\Identity}{I}
\DeclareMathOperator{\Law}{\mathcal{L}}
\DeclareMathOperator{\Sampmed}{Med}
\DeclareMathOperator{\Theomed}{med}
\DeclareMathOperator{\Trace}{tr}
\DeclareMathOperator{\Var}{Var}
\newcommand{\abs}[1]{\left\lvert#1\right\rvert}
\newcommand{\Bandwidth}{\nu}
\newcommand{\bigo}[1]{O\left(#1\right)}
\newcommand{\card}[1]{\left\lvert#1\right\rvert}
\newcommand{\Cdf}{F}
\newcommand{\cdf}[1]{\Cdf\left(#1\right)}
\newcommand{\defeq}{\vcentcolon =}
\newcommand{\diff}{\mathop{}\mathopen{}\mathrm{d}} 
\newcommand{\Dist}{\delta}
\newcommand{\dist}[2]{\delta\left(#1,#2\right)}
\newcommand{\Ecdf}{\widehat{F}}
\renewcommand{\exp}[1]{\Exp\left(#1\right)}
\newcommand{\exps}[1]{\Exps^{#1}}
\newcommand{\Expec}{\mathbb{E}}
\newcommand{\expec}[1]{\Expec\left[#1\right]}
\newcommand{\expecunder}[2]{\Expec_{#2}\left[#1\right]}
\newcommand{\floor}[1]{\left\lfloor#1\right\rfloor}
\newcommand{\Gaussian}{\mathcal{N}}
\newcommand{\gaussian}[2]{\Gaussian\left(#1,#2\right)}
\newcommand{\Gram}{\mathrm{K}}
\newcommand{\Hilbert}{\mathcal{H}}
\newcommand{\hilbertnorm}[1]{\norm{#1}_{\Hilbert}}
\newcommand{\Indic}{\mathds{1}}
\newcommand{\indic}[1]{\Indic_{#1}}
\newcommand{\Inputspace}{\mathcal{X}}
\newcommand{\Kernel}{k}
\newcommand{\kernel}[2]{\Kernel\left(#1,#2\right)}
\newcommand{\Medvalue}{m}
\newcommand{\norm}[1]{\left\lVert#1\right\rVert}
\newcommand{\proba}[1]{\Proba\left (#1\right )}
\newcommand{\Proba}{\mathbb{P}}
\newcommand{\Reals}{\mathbb{R}}
\newcommand{\Sqmedh}{H}
\newcommand{\trace}[1]{\Trace\left(#1\right)}
\newcommand{\var}[1]{\Var\left(#1\right)}
\newcommand{\cvlaw}{\overset{\Law}{\longrightarrow}}
\newcommand{\cvproba}{\overset{\Proba}{\longrightarrow}}
\theoremstyle{plain}
\newtheorem{proposition}{Proposition}[section]
\newtheorem{lemma}{Lemma}[section]
\theoremstyle{definition}
\newtheorem{assumption}{Assumption}[section]
\theoremstyle{remark}
\newtheorem{remark}{Remark}[section]
\def\hlinewd#1{%
	\noalign{\ifnum0=`}\fi\hrule \@height #1 %
	\futurelet\reserved@a\@xhline}
\newcommand{\MMD}{\text{MMD}}%
\newcommand{\quadMMD}{\text{MMD}_u}%
\newcommand{\linMMD}{\text{MMD}_{\ell}}%
\newcommand{\Bclass}[2]{\mathcal{D}\left(#1,#2\right)}
\newcommand{\ratioLin}{R_{\ell}}
\newcommand{\ratioQuad}{R_u}
\newcommand{\bdMedMean}{\nu_{\text{med}}^{\text{Mean}}}
\newcommand{\bdMedVar}{\nu_{\text{med}}^{\text{Var}}}
\newcommand{\bdCritMeanLin}{\nu_{\ell}^{\text{Mean}}}
\newcommand{\bdCritMeanQuad}{\nu_{u}^{\text{Mean}}}
\newcommand{\bdCritVarLin}{\nu_{\ell}^{\text{Var}}}
\newcommand{\bdCritVarQuad}{\nu_{u}^{\text{Var}}}
\setlist[itemize]{leftmargin=*}
\let\stdappendixpage\appendixpage
\renewcommand*\appendixpage{{%
   \let\markboth\relax\stdappendixpage}}
\begin{document}

\begin{frontmatter}

\title{Large sample analysis of the median heuristic}
\runtitle{Large sample analysis of the median heuristic}


\author{\fnms{Damien} \snm{Garreau}\ead[label=e1]{damien.garreau@tuebingen.mpg.de}\thanksref{t1}\thanksref{t2}}
\thankstext{t1}{Corresponding author}
\address{\printead{e1}}
\and
\author{\fnms{Wittawat} \snm{Jitkrittum}\ead[label=e2]{wittawat.jitkrittum@tuebingen.mpg.de}\thanksref{t2}}
\affiliation{\printead{e2}}
\address{\printead{e2}}
\and
\author{\fnms{Motonobu} \snm{Kanagawa}\ead[label=e3]{motonobu.kanagawa@tuebingen.mpg.de}\thanksref{t2}}
\address{\printead{e3}}
\address{\thanksref{t2}Max Planck Institute for Intelligent Systems}
\thankstext{t2}{Max-Planck-Ring 4, 72\, 076 T\"ubingen, Germany}

\runauthor{Garreau et al.}

\begin{abstract}
In kernel methods, the median heuristic has been widely used as a way of setting the bandwidth of RBF kernels.
While its empirical performances make it a safe choice under many circumstances, there is little theoretical understanding of why this is the case.
Our aim in this paper is to advance our understanding of the median heuristic by focusing on the setting of kernel two-sample test.
We collect new findings that may be of interest for both theoreticians and practitioners.
In theory, we provide a convergence analysis that shows the asymptotic normality of the bandwidth chosen by the median heuristic in the setting of kernel two-sample test.
Systematic empirical investigations are also conducted in simple settings, comparing the performances based on the bandwidths chosen by the median heuristic and those by the maximization of test power. 
\end{abstract}



\end{frontmatter}

\section{Introduction}
\label{sec:median:introduction}

Kernel methods form an important class of algorithms in machine learning and statistics \citep{Sch_Smo:2002,MuaFukSriSch17}.
They make use of rich feature spaces that depend only on the kernel chosen by the user.
Given a positive semi-definite kernel~$\Kernel$ and observations $x_1,\ldots,x_n$, the first step of most kernel-based methods is to compute the Gram matrix $\Gram=\left(\kernel{x_i}{x_j}\right)_{1\leq i,j\leq n}$.
Thanks to the celebrated \emph{kernel trick\/}, all ensuing computations need only the knowledge of~$\Gram$.

In this paper, we are especially interested in data lying in a metric space~$\left(\Inputspace,\Dist\right)$.
When this is the case, commonly used kernels are \emph{radial basis function (RBF) kernels} of the form
\begin{equation}
\label{eq:median:rbf-kernel}
\kernel{x}{y} = f(\dist{x}{y} / \Bandwidth)
\, , \quad x,y \in \mathcal{X},
\end{equation}
where $f:\Reals_+\to\Reals_+$ is a function and $\Bandwidth$ is a positive parameter called the \emph{bandwidth}.
In many applications, the space~$\Inputspace$ is~$\Reals^d$ and~$\Dist$ is derived from the  Euclidean norm $\norm{x}=\sqrt{\sum_i x_i^2}$, that is,  $\dist{x}{y}=\norm{x-y}$.
Numerous kernels used in practice belong to this class of kernels. 
For instance, $f(x)=\exp{-x^2}$ corresponds to the Gaussian kernel~\citep{Aiz_Bra_Roz:1964}, arguably the most popular positive definite kernel used in applications \citep[see, for instance,][]{Ver_Koj_Sch:2004}.
The function $f(x)=\exp{-x}$ yields the exponential kernel---also called Laplace or Laplacian kernel---whereas more exotic~$f$ give rise to less common kernels such as the rational quadratic kernel, the wave kernel or the Mat\'ern kernel~\citep[see][and references therein]{Gen:2001}.

It is well-known that the performance of kernel methods depends highly on the kernel choice. 
In practice, this choice often reduces to the calibration of the bandwidth $\Bandwidth$, which may even be more important than the choice of~$f$ \citep[Section~4.4.5]{Sch_Smo:2002}.
Since the Gram matrix depends only on the $\norm{x_i-x_j}/\Bandwidth$ in this case, it is reasonable to pick~$\Bandwidth$ in the same order as the family of all pairwise distances $\left(\norm{x_i-x_j}\right)_{1\leq i,j\leq n}$.
As an example, suppose that we settled for the Gaussian kernel. 
Then when $\Bandwidth\to 0$, the Gram matrix~$\Gram$ is the identity matrix, and when $\Bandwidth\to\infty$, the components of $\Gram$ are all equal to~$1$.
All relevant information about the data is lost in both these extreme cases.
This is a general phenomenon, even though the values taken by~$\Gram$ in the degenerate cases depend on the function~$f$.
Hence a reasonable middle-ground for choosing~$\Bandwidth$ is to pick a value ``in the middle range'' of the $\left(\norm{x_i-x_j}\right)_{1\leq i,j\leq n}$, that is, an empirical quantile, which is often set to be the median.
This strategy is called the {\em median heuristic}; see Section~\ref{sec:def-median-heuristic} for a precise definition.

The median heuristic has been extensively used in practice.\footnote{
As noted in~\citet{Fla_Sej_Cun:2016}, the origin of the median heuristic is unclear and does not appear in the monograph of~\citet{Sch_Smo:2002}, while it has become the main reference for this heuristic.
The earliest appearance of the median heuristic that we know of is in~\citet[Section~5]{Sri_Fuk_Gre:2009}.
\citet{Gre_Bor_Ras:2012} refers to~\citet{Tak_Le_Sea:2006} and~\citet{Sch_Smo_Mul:1997} for similar heuristics.}
In fact, in unsupervised learning where no principled way is available for bandwidth selection, the median heuristic may be one of the first  choice.
This is the case for, among many others, kernel PCA \citep{SchSmoMul98}, kernel CCA \citep{BacJor02} and kernel two-sample test \citep{Gre_Bor_Ras:2012}. 
It has also been used in supervised learning, \emph{e.g.}, kernel SVM~\citep{Bos_Guy_Vap:1992} or kernel ridge regression~\citep{Hoe_Ken:1970}.
There, one can choose~$\Bandwidth$ from prescribed candidate values by performing cross-validation, and it is common to use the median heuristic to set the scale of these candidate values, \emph{e.g.}, $\Bandwidth$ may be chosen from $( 2^{a} H_n )_{a = - M}^M$, where $H_n$ denotes the value given by the median heuristic and $M$ is some positive integer.  
In fact, the empirical median is the default bandwidth choice in the kernel SVM implementation of the \texttt{kernlab} \textsf{R}  package~\citep{Kar_Smo_Hor:2004}.

Despite its popularity, there is very little theoretical understanding of the median heuristic.
To the best of our knowledge, the only work in this direction is contained in~\citet{Red_Ram_Poc:2015}.
They observe that the median of all the pairwise distances has to be close to the mean pairwise distance, $\Expec\norm{X_i-X_j}$.
Based on this observation, they obtain the asymptotic of the median heuristic when the dimension of the data goes to infinity, using the asymptotic of the mean pairwise distance.
This argument can be made rigorous by observing that, given a random variable~$X$ with a second order moment, $\abs{\Expec X - \Theomed(X)} \leq \sqrt{\var{X}}$ holds \citep{Mal:1991}.
Hence the observation of~\citet{Red_Ram_Poc:2015} is correct, up to a variance term.
We will see in Section~\ref{sec:median:main} that our results make this insight more precise.

The aim of this article is to advance our understanding of the median heuristic, both theoretically and empirically. 
To this end, we focus on the setting of the kernel two-sample test \citep{Gre_Bor_Ras:2012}, which has been used in a wide range of applications including transfer learning \citep{pmlr-v37-long15} and generative adversarial learning \citep{NIPS2017_6815}; see \citet{MuaFukSriSch17} for a recent extensive survey. 
As kernel two-sample testing is an unsupervised problem, numerous authors report the use of the median heuristic in their experiments \citep{Sri_Fuk_Gre:2009,Arl_Cel_Har:2012,Red_Ram_Poc:2015,Mua_Sri_Fuk:2016,Zha_Fil_Gre:2017,Jit_Sza_Gre:2017,Sut_Tun_Str:2017}.
On the other hand, there is a more principled approach for bandwidth (or kernel) selection based on the maximization of the test power \citep{Gre_Bor_Ras:2012,Jit_Sza_Gre:2017,Sut_Tun_Str:2017}.
Apart from theoretical analysis, we will conduct empirical comparisons between these approaches, and discuss when the median heuristic works and when it fails in simple examples; see Section~\ref{sec:example}.

This paper is organized as follows.
Our setting is made explicit in Section~\ref{sec:median:setting} and we show in the same section how it is relevant for this application.
In Section~\ref{sec:median:main}, we state our main result: the median heuristic is asymptotically normal when the number of observations goes to $+\infty$.
In particular, the median heuristic converges towards the theoretical median of a target distribution that we describe in terms of the samples' distributions.
This result is obtained by the mean of an auxiliary proposition that we think has an interest of its own, that is, a central limit theorem for a certain class of $U$-statistics that we state in the same section.
Finally, we use this result in Section~\ref{sec:example} to investigate the quality of the median heuristic as a bandwidth choice in simple settings.
While we provide sketches of proofs, the complete proofs are deferred to the Appendix. 

\section{Setting}
\label{sec:median:setting}

Given any random variable~$Z$, the notation~$Z'$ will stand for an independent copy of~$Z$ and we write $Z\sim \pi$ if~$Z$ follows the distribution of~$\pi$. 
Unless specified in subscript, the expected value is taken with respect to all the random variables that appear in the expression.
For instance, $\expec{h(X,Y)}$ stands for $\expecunder{h(X,Y)}{X,Y}$.
We also denote by $L^2(P)$ the space of real-valued measurable functions~$f$ such that $\expec{f(X)^2}<+\infty$ where $X\sim P$, with $P$ a probability distribution.
We use $\cvlaw$ for convergence in law, and $\cvproba$ for convergence in probability.

In the following, we suppose that we are given a \emph{triangular array\/} of independent $\Reals^d$-valued random variables.
Namely, for each sample size~$n$, we suppose that we observe an \emph{entirely} new sample $X_{n,1},\ldots,X_{n,n}$. 
We see later how this asymptotic setting is adapted to the methods we consider. 
Let~$X$ (resp.~$Y$) be a~$\Reals^d$-valued random variable following the law~$P$ (resp.~$Q$).
Our main assumption on the distribution of the~$X_{n,i}$ is that our observations are split in two contiguous segments
such that, on the left segment they follow~$P$, and on the right segment they follow~$Q$.
That is, 
\begin{assumption}[\textbf{Split sample}]
	\label{assump:median:distribution}
	There is a fixed $\alpha\in(0,1)$ such that, for any $i\leq \alpha n$, $X_{n,i}\sim P$ and $X_{n,i}\sim Q$ if $i>\alpha n$.
\end{assumption}
We will assume from now on that~$\alpha n$ is an integer.
Everything that follows can be readily adapted by replacing $\alpha n$ with $\floor{\alpha n}$ when it is needed.

\subsection{Connections with kernel two-sample test}
\label{sec:median:two-sample}

Let us briefly recall the modus operandi of the kernel two-sample test with our notation.  
The goal of two-sample test is to decide whether $P=Q$ or $P\neq Q$ given observations $x_1,\ldots,x_{\alpha n}$ and $x_{\alpha n+1},\ldots,x_{\alpha n}$.
\citet{Gre_Bor_Ras:2006} have proposed a kernel method for two-sample testing, that relies on the mean embeddings of~$P$ and~$Q$ in the reproducing kernel Hilbert space~$\Hilbert$ associated with~$\Kernel$.
Let us call~$\mu_P$ and~$\mu_Q$ these embeddings; then a good measure of proximity between the distributions~$P$ and~$Q$ is the so called \emph{maximum mean discrepancy\/} (MMD), which is defined as $\hilbertnorm{\mu_P-\mu_Q}$.
It is also possible to write the (squared) MMD as
\[
\MMD^2(P,Q) = \expec{\kernel{x}{x'}} - 2\expec{ \kernel{x}{y}} + \expec{\kernel{y}{y'}}
\, ,
\]
where $x,x' \sim P$ and $y,y' \sim Q$ are independent.
Setting $M=\alpha n$ and $N=(1-\alpha)n$, \citet{Gre_Bor_Ras:2006} provide an unbiased estimate of this quantity, 
\begin{align*}
\quadMMD^2(P,Q) &= \frac{1}{M(M-1)}\sum_{\substack{i,j=1\\j\neq i}}^M \kernel{x_i}{x_j} 
\\ 
&+\frac{1}{N(N-1)} \sum_{\substack{i,j=M +1\\i\neq j}}^{M+N} \kernel{x_i}{x_j} 
-\frac{2}{MN}\sum_{\substack{i=1\\j=M+1}}^{M,N}\kernel{x_i}{x_j}
\, .
\end{align*}
In the following, we refer to $\quadMMD^2$ as the \emph{quadratic-time} statistic, since its computational cost is quadratic in the number of samples. 
In the case $\alpha=1/2$, that is $M=N$, \citet{Gre_Bor_Ras:2012} also proposed a linear-time estimate, 
\begin{align*}
\linMMD^2(P,Q) &= \frac{2}{M} \sum_{i=1}^{M/2} \kernel{x_{2i-1}}{x_{2i}} + \kernel{x_{M+2i-1}}{x_{M+2i}} \\
&- \kernel{x_{2i-1}}{x_{M+2i}} - \kernel{x_{2i}}{x_{M+2i}}
\, .
\end{align*}
Letting~$n$ grow to infinity corresponds to let both~$M$ and~$N$ grow to infinity with the ratio $M/N$ constant equal to $\alpha / (1-\alpha)$.

\begin{remark}
The two-sample test problem is very close to that of \emph{change-point detection} when there is a single change-point, the main difference being that $\alpha$ is generally unknown in the change-point problem. 
Recently, some \emph{kernel} change-point detection methods have been proposed \citep{Har_Cap:2007,Arl_Cel_Har:2012}.
These methods aim to detect a change-point by minimizing a \emph{kernelized} least-squares criterion, and the median heuristic is also used in this case to set the bandwidth \citep{Arl_Cel_Har:2012,Gar_Arl:2016}. 
The setting described in this section is also appropriate to study the median heuristic in this case.
\end{remark}

\subsection{Median heuristic}
\label{sec:def-median-heuristic}

Suppose that we use a kernel that has the form~\eqref{eq:median:rbf-kernel} for a fixed~$f$ with bandwidth $\nu > 0$.
We define the median heuristic as the choice of bandwidth $\Bandwidth=\sqrt{\Sqmedh_n/2}$, where $H_n > 0$ is 
\begin{equation}
\label{eq:median:def-first-median-heuristic}
\Sqmedh_n = \Sampmed\bigl\{\norm{X_{n,i}-X_{n,j}}^2\, | \, 1\leq i < j\leq n\bigr\}
\, ,
\end{equation}
where~$\Sampmed$ is the empirical median.
More precisely, $H_n$ is obtained by first ordering the $\norm{X_{n,i}-X_{n,j}}^2$ in increasing order, and then setting $H_n$ to be the central element if $n(n-1)/2$ is odd, or the mean of the two most central elements if~$n(n-1)/2$ is even.
Note that some authors choose simply $\Bandwidth=\sqrt{\Sqmedh_n}$.

In order to investigate the asymptotic properties of~$\Sqmedh_n$, rather than using Eq.~\eqref{eq:median:def-first-median-heuristic}, we are going to define~$\Sqmedh_n$ \emph{via\/} the empirical cumulative distribution function of the $\norm{X_{n,i}-X_{n,j}}^2$, that is, 
\begin{equation}
\label{eq:median:ecdf}
\forall t\in\Reals,\quad \Ecdf_n(t) = \frac{2}{n(n-1)}\sum_{1\leq i<j\leq n} \indic{\norm{X_{n,i}-X_{n,j}}^2 \leq t}
\, .
\end{equation}
For any $p\in (0,1)$, we define the generalized inverse of~$\Ecdf_n$ by $\Ecdf^{-1}_n(p) = \inf \bigl\{t\in\Reals \, | \, \Ecdf(t)\geq p\bigr\}$.
Then $\Sqmedh_n$ may be written as
\begin{equation}
\label{eq:def:median-heuristic}
\Sqmedh_n = \Ecdf^{-1}_n\left(\frac{1}{2}\right)
\, .
\end{equation}
Note that other empirical quantiles as $\Ecdf_n^{-1}(p)$ for  $p\in(0,1)$ can be used in practice ({\em e.g.}, $p=0.1$ and $p=0.9$).
Though we are mainly concerned with~$\Sqmedh_n$, we will see that our main result still holds for arbitrary~$p$.


\section{Main results}
\label{sec:median:main}

\subsection{Empirical cumulative distribution function of the pairwise distances}
\label{sec:median:ecdf}

For any $1\leq i,j\leq n$, set $\delta_{ij}^2\defeq \norm{X_{n,i}-X_{n,j}}^2$. 
Under Assumption~\ref{assump:median:distribution}, there are only three possibilities for $\delta_{ij}^2$. 
Namely, for any fixed, distinct indices~$i$ and~$j$ such that $i<j$,   
\begin{itemize}
	\item[]
	\quad (i) if $i\leq \alpha n$ and $j \leq \alpha n$, then $\delta_{ij}^2$ has the distribution of $\norm{X-X'}^2$;
    
	\item[]
	\quad (ii) if $i>\alpha n$ and $j>\alpha n$, then $\delta_{ij}^2$ has the distribution of $\norm{Y-Y'}^2$;
    
	\item[]
	\quad (iii) if $i\leq \alpha n$ and $j>\alpha n$, then $\delta_{ij}^2$ has the  distribution of $\norm{X-Y}^2$.
\end{itemize}
In the following, we set random variables $T_{XX}\sim \norm{X-X'}^2$, $T_{YY}\sim \norm{Y-Y'}^2$, and $T_{XY}\sim \norm{X-Y}^2$. 
There are $\alpha n(\alpha n-1)/2$ occurrences of case~(i), so case~(i) occurs with proportion $\alpha^2$ as $n \to +\infty$.
Similarly, case~(ii) occurs with proportion $(1-\alpha)^2$ and case~(iii) with proportion $2\alpha(1-\alpha)$.

Define a mixture distribution~$T$ $\sim T_{XX}$, $T\sim T_{YY}$ and $T\sim T_{XY}$ with weights $\alpha^2$, $(1-\alpha)^2$ and $2\alpha(1-\alpha)$ respectively.
Thereafter, we will call~$T$ the \emph{target\/} random variable and denote by~$\Cdf$ its cumulative distribution function. 
Intuitively, when $n\to\infty$, $T_n$ should behave like a $n$-sample of the target~$T$, provided that the dependency between the $\norm{X_{n,i}-X_{n,j}}^2$ that have elements in common is not too strong. 
Indeed, a specialization of a result stated in the next paragraph shows that
\begin{equation}
\label{eq:median:ecdf-cv-simple}
\forall t\in\Reals,\qquad \Ecdf_n(t) \cvproba  \cdf{t}
\, .
\end{equation}
We believe that Eq.~\eqref{eq:median:ecdf-cv-simple} is already a step in the comprehension of the median heuristic, since we are now able to think about~$\Sqmedh_n$ ``approximately'' as the theoretical median of the target~$T$.
We refer to Figure~\ref{fig:median:ecdf-example} for two examples when $P$ and $Q$ are known distributions. 

\begin{figure}[t!]
\centering 
	\includegraphics[width=0.48\linewidth]{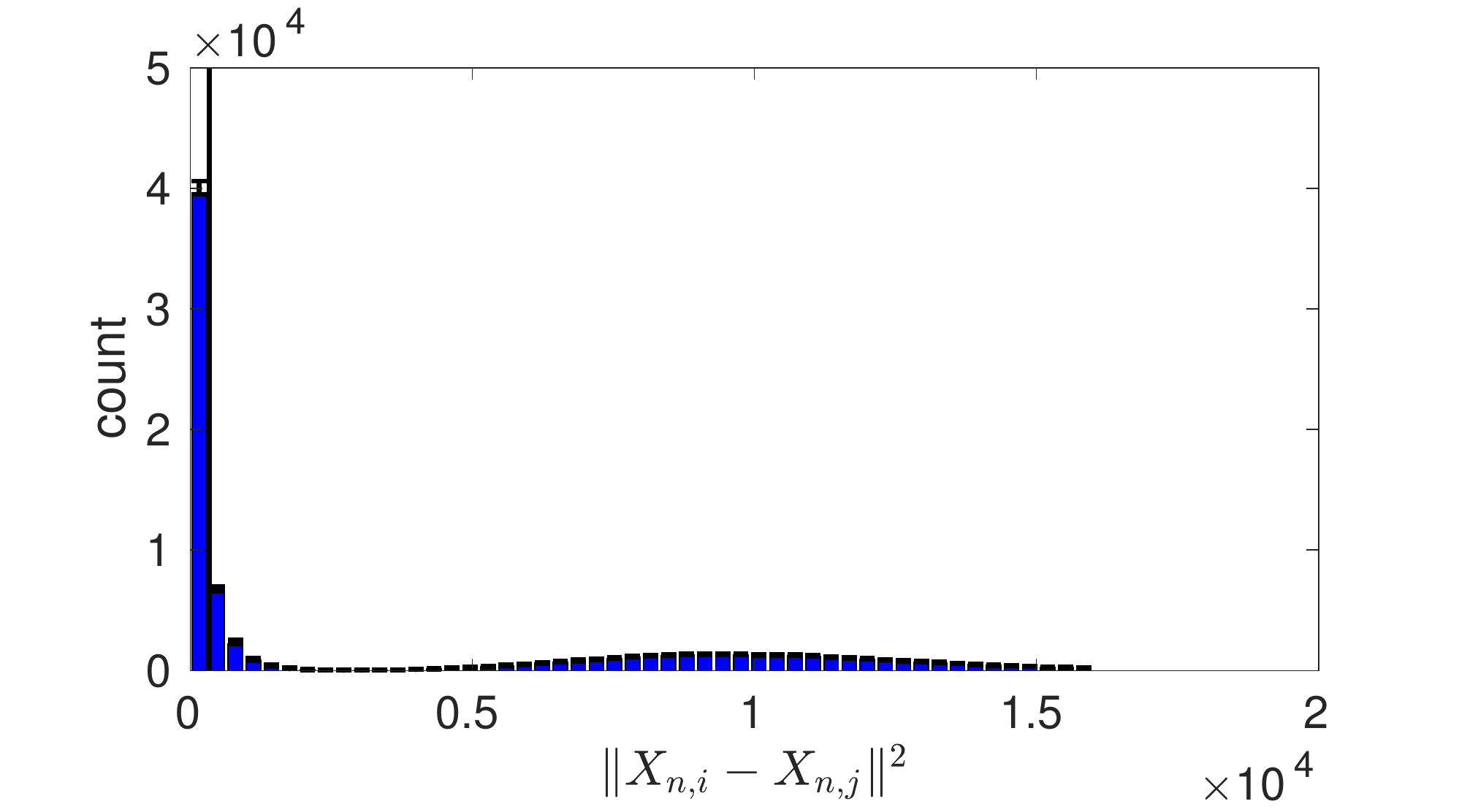}
    \hspace{-0.7cm}
	\includegraphics[width=0.49\linewidth]{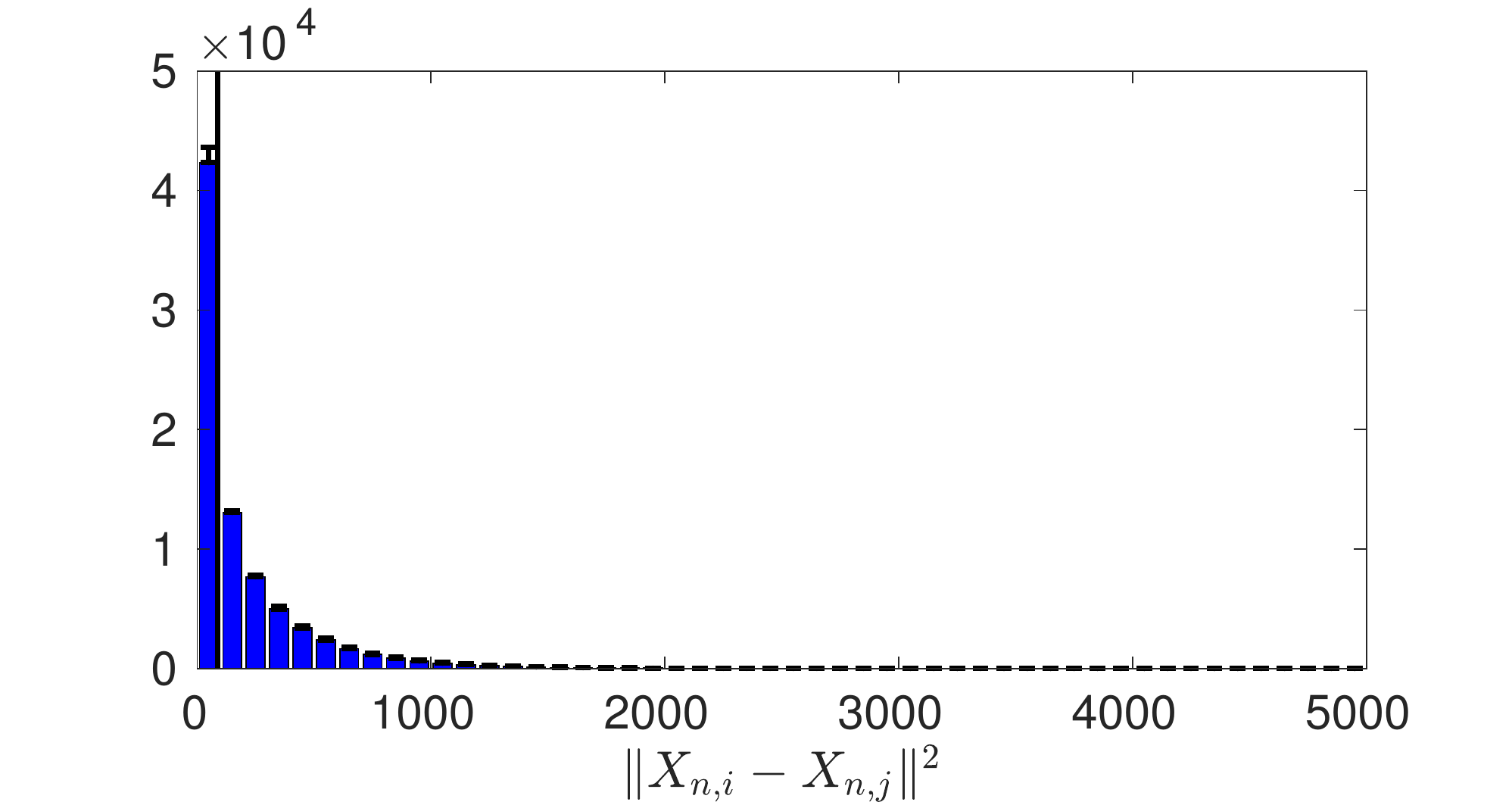}
    \vspace{-0.05in}
	\caption[Histogram of the pairwise squared-distances]{\label{fig:median:ecdf-example}Histogram of the $\norm{X_{n,i}-X_{n,j}}^2$ with $n=400$ for Gaussian distributions in dimension $d=100$ and proportion $\alpha=.25$. 
    \emph{Left panel:} change in the mean, $X\sim \gaussian{0}{\Identity_d}$ and $Y\sim\gaussian{10^3\Indic}{\Identity_d}$. 
    \emph{Right panel:} change in the variance, $X\sim \gaussian{0}{\Identity_d}$ and $Y\sim\gaussian{0}{2\Identity_d}$.
    The error bars correspond to the standard deviation over $10$ repetitions of the experiment. 
}
\end{figure}

Before presenting a rigorous statement of Eq.~\eqref{eq:median:ecdf-cv-simple}, we provide a result that shows how a gap appears between on one side $T_{XX}$ and $T_{YY}$ and $T_{XY}$ on the other side if the distributions of~$P$ and~$Q$ are well-separated.
This explains the ``two bumps'' behavior depicted in the left panel of Figure~\ref{fig:median:ecdf-example}. 
The left mode of the empirical distribution corresponds to realizations of~$T_{XX}$ and~$T_{YY}$, that are close to zero by definition, whereas the right mode corresponds to realizations of~$T_{XY}$, which can be arbitrarily far from~$0$.

\begin{lemma}[\textbf{Gap between intra- and inter-distances}]
\label{lemma:gap}
Set $\mu_X$ (resp.~$\mu_Y$) the expectation and $\Sigma_X$ (resp. $\Sigma_Y$) the covariance matrix of~$X$ (resp.~$Y$). 
Assume that there exists $\lambda > 75$ such that
\[
\norm{\mu_X-\mu_Y}^2 \geq \lambda\left(\trace{\Sigma_X} + \trace{\Sigma_Y}\right)
\, .
\]
Then, with probability at least $1-75/\lambda$, 
\[
\max\left(T_{XX},T_{YY}\right) + \frac{\norm{\mu_X-\mu_Y}^2}{25} < T_{XY}
\, .
\]
\end{lemma}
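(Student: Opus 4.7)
My plan is to control each of $T_{XX}$, $T_{YY}$, and $T_{XY}$ separately via elementary concentration (Markov and Chebyshev) around the expectations
\[
\expec{T_{XX}} = 2\trace{\Sigma_X}, \quad \expec{T_{YY}} = 2\trace{\Sigma_Y}, \quad \expec{T_{XY}} = \norm{\mu_X-\mu_Y}^2 + \trace{\Sigma_X} + \trace{\Sigma_Y},
\]
using the assumption $\norm{\mu_X-\mu_Y}^2 \geq \lambda(\trace{\Sigma_X}+\trace{\Sigma_Y})$ as the single source of scale.

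The key decomposition I would use is, with $Z \defeq (X-\mu_X) - (Y-\mu_Y)$ (which has mean $0$ and $\expec{\norm{Z}^2} = \trace{\Sigma_X}+\trace{\Sigma_Y}$),
\[
T_{XY} = \norm{\mu_X-\mu_Y}^2 + 2\langle Z, \mu_X-\mu_Y\rangle + \norm{Z}^2 \geq \norm{\mu_X-\mu_Y}^2 - 2\abs{\langle Z, \mu_X-\mu_Y\rangle},
\]
since $\norm{Z}^2 \geq 0$. Setting $S \defeq \trace{\Sigma_X}+\trace{\Sigma_Y}$, Chebyshev applied to $\langle Z, \mu_X-\mu_Y\rangle$ (a mean-zero random variable whose variance is $(\mu_X-\mu_Y)^\top (\Sigma_X+\Sigma_Y)(\mu_X-\mu_Y) \leq S\, \norm{\mu_X-\mu_Y}^2$) gives, for any $k>0$,
\[
\proba{\, 2\abs{\langle Z, \mu_X-\mu_Y\rangle} \geq \tfrac{1}{k}\norm{\mu_X-\mu_Y}^2} \leq \frac{4 k^2 S}{\norm{\mu_X-\mu_Y}^2} \leq \frac{4k^2}{\lambda}.
\]

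For the intra-distances, I would apply Markov's inequality: for any $m>0$,
\[
\proba{T_{XX} \geq \tfrac{1}{m}\norm{\mu_X-\mu_Y}^2} \leq \frac{2m\,\trace{\Sigma_X}}{\norm{\mu_X-\mu_Y}^2}, \qquad \proba{T_{YY} \geq \tfrac{1}{m}\norm{\mu_X-\mu_Y}^2} \leq \frac{2m\,\trace{\Sigma_Y}}{\norm{\mu_X-\mu_Y}^2}.
\]
Summing these two and invoking the gap assumption bounds the total failure probability for the pair $(T_{XX},T_{YY})$ by $2m/\lambda$. A union bound over all three bad events then yields a total failure probability of at most $(2m + 4k^2)/\lambda$.

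On the complementary good event, $T_{XY} \geq (1-1/k)\norm{\mu_X-\mu_Y}^2$ and $\max(T_{XX},T_{YY}) < \norm{\mu_X-\mu_Y}^2/m$, so
\[
T_{XY} - \max(T_{XX},T_{YY}) > \left(1 - \tfrac{1}{k} - \tfrac{1}{m}\right)\norm{\mu_X-\mu_Y}^2.
\]
The job reduces to choosing $(k,m)$ so that $2m + 4k^2 \leq 75$ and $1 - 1/k - 1/m \geq 1/25$. The choice $k=m=4$ gives $2m+4k^2 = 72 \leq 75$ and $1 - 1/4 - 1/4 = 1/2 \gg 1/25$, which yields exactly the statement (with considerable slack; sharper constants would tighten the $1/25$ factor, but the slack explains why the statement can be stated with clean rational constants).

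The only subtlety, which I would check carefully, is the variance bound $(\mu_X-\mu_Y)^\top(\Sigma_X+\Sigma_Y)(\mu_X-\mu_Y) \leq S\,\norm{\mu_X-\mu_Y}^2$: this follows from $\Sigma_X+\Sigma_Y \preceq \trace{\Sigma_X+\Sigma_Y}\cdot \Identity$ since all eigenvalues of $\Sigma_X+\Sigma_Y$ are nonnegative and bounded by the trace. Besides this, everything is straightforward bookkeeping; the main ``obstacle'' is simply picking constants that make the two competing inequalities close simultaneously.
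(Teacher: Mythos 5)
Your proof is correct, and it takes a genuinely different route from the paper's. The paper works at the level of distances rather than squared distances: it applies Chebyshev's inequality to $\norm{X-\mu_X}$ and $\norm{Y-\mu_Y}$ with threshold $r = \norm{\mu_X-\mu_Y}/5$, deduces via the triangle inequality that $T_{XX}^{1/2}, T_{YY}^{1/2} < 2r$ while $T_{XY}^{1/2} \geq 3r$ on a good event of probability at least $1 - 75\left(\trace{\Sigma_X}+\trace{\Sigma_Y}\right)/\norm{\mu_X-\mu_Y}^2$, and then squares to get the stated gap of $r^2 = \norm{\mu_X-\mu_Y}^2/25$. You instead expand $T_{XY}$ bilinearly, discard $\norm{Z}^2 \geq 0$, apply Chebyshev only to the scalar cross term $\langle Z, \mu_X-\mu_Y\rangle$, and handle the intra-class terms by Markov using just their first moments $2\trace{\Sigma_X}$ and $2\trace{\Sigma_Y}$; the variance bound you flag as the one subtlety is indeed justified, since the largest eigenvalue of the positive semi-definite matrix $\Sigma_X+\Sigma_Y$ is at most its trace. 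Your bookkeeping checks out: with $k=m=4$ the failure probability is $72/\lambda \leq 75/\lambda$ and the gap on the good event is $\norm{\mu_X-\mu_Y}^2/2$, comfortably above the required $\norm{\mu_X-\mu_Y}^2/25$. Your route in fact gives slightly sharper constants (a larger separation for the same failure probability), at the cost of the more transparent geometric picture of the paper's triangle-inequality argument; both proofs share the implicit non-degeneracy requirement $\mu_X \neq \mu_Y$ needed to divide by $\norm{\mu_X-\mu_Y}^2$.
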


\begin{remark}
Recall that $T$ is a mixture such that $T \sim T_{XX}$, $T\sim T_{YY}$ and $T\sim T_{XY}$ with respective proportions $\alpha^2$, $(1-\alpha)^2$ and $2\alpha(1-\alpha)$.
Since $\alpha^2+(1-\alpha)^2 \geq 2\alpha (1-\alpha)$ holds, Lemma~\ref{lemma:gap} therefore implies that the median is determined by the scales of $T_{XX}$ or $T_{YY}$, and the influence of $T_{XY}$ is relatively weak if $\alpha < 1/2$. 
\end{remark}

\subsection{Convergence of the empirical cumulative distribution function}

It turns out that Eq.~\eqref{eq:median:ecdf-cv-simple} is a trivial consequence of a much stronger statement.
Indeed, $\Ecdf_n(t)$ can be seen as a sum of three dependent $U$-statistics with kernel $h(x,y)=\indic{\norm{x-y}^2\leq t}$, and the following result shows that it follows a central limit theorem.
We refer to classical textbooks~\citep{Lee:1990,Kor_Bor:2013} for an introduction to the theory of $U$-statistics.
	
\begin{proposition}[\textbf{CLT for non-identically distributed triangular array $U$-statistic}]
	\label{prop:median:clt-ustat}
	Consider $h:\Reals\times \Reals \to \Reals$ such that $h\in L^2(P)\cap L^2(Q) \times L^2(P)\cap L^2(Q)$, and suppose that the $(X_{n,i})_{i=1}^n$ satisfy Assumption~\ref{assump:median:distribution}.
	Define
	\[
	U_n = \frac{2}{n(n-1)}\sum_{1\leq i<j\leq n} h(X_{n,i},X_{n,j})
	\, ,
	\]
	and set
	\begin{equation} \label{eq:def-theta}
	\theta =\alpha^2 \expec{h(X,X')} + 2\alpha(1-\alpha)\expec{h(X,Y)} + (1-\alpha)^2\expec{h(Y,Y')}
	\, .	
	\end{equation}
	Then 
	\begin{equation}
	\label{eq:median:clt-ustat}
	\sqrt{n}(U_n-\theta) \cvlaw \gaussian{0}{\sigma^2}
	\, ,
	\end{equation}
	where $\sigma=\sigma(h,P,Q)$ is defined as
    \begin{eqnarray}
\sigma^2 
&=& {\rm Var}_{X}\left(\ \alpha \expec{h(X,X')} + (1-\alpha) \expec{h(X,Y)}\ \right) \label{eq:median:def-variance} \\
&+&  {\rm Var}_{Y} \left(\ \alpha \expec{h(X,Y)} +(1-\alpha)  \expec{h(Y,Y')} \ \right), \nonumber
\end{eqnarray}
with all of $X, X'\sim P$ and $Y, Y'\sim Q$ being independent.    
\end{proposition}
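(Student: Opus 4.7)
The plan is to apply the Hoeffding decomposition of $U$-statistics, adapted to the triangular, independent-but-not-identically-distributed array prescribed by Assumption~\ref{assump:median:distribution}. First I would split the double sum defining $U_n - \theta$ according to which segment each index lies in: pairs with $i < j \le \alpha n$ (within-$P$), pairs with $\alpha n < i < j \le n$ (within-$Q$), and pairs with $i \le \alpha n < j$ (cross). The first two are rescaled one-sample $U$-statistics in $h$ and the third is a two-sample $U$-statistic; the weights $\alpha^2$, $(1-\alpha)^2$, $2\alpha(1-\alpha)$ in~\eqref{eq:def-theta} appear immediately from dividing the cardinalities of these index sets by $\binom{n}{2}$.

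The core step is the H\'ajek projection
\[
\hat U_n - \theta \defeq \sum_{i=1}^n \bigl( \expec{U_n \mid X_{n,i}} - \theta \bigr),
\]
which, by independence of the $X_{n,i}$, is a sum of independent centered variables. Counting the pairs containing a given index yields, for $i \le \alpha n$, a summand of the form $\frac{2(\alpha n - 1)}{n(n-1)}(\phi_1^P(X_{n,i}) - \expec{h(X,X')}) + \frac{2(1-\alpha)n}{n(n-1)}(\phi_2^P(X_{n,i}) - \expec{h(X,Y)})$, where $\phi_1^P(x) = \expecunder{h(x,X')}{X'}$ and $\phi_2^P(x) = \expecunder{h(x,Y)}{Y}$, with an analogous mirror formula for $i > \alpha n$ built from the conditional expectations against $X \sim P$ and $Y' \sim Q$. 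The assumption $h \in L^2(P) \cap L^2(Q) \times L^2(P) \cap L^2(Q)$ ensures these conditional expectations are in $L^2$. The summands within each segment are i.i.d.\ and the two segments are mutually independent, so applying the Lindeberg--L\'evy CLT within each segment and summing the independent limits yields $\sqrt{n}(\hat U_n - \theta) \cvlaw \gaussian{0}{\sigma^2}$, with a direct computation identifying the limiting variance with the expression for~$\sigma^2$ stated in the proposition.

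It then remains to show that $\sqrt{n}(U_n - \hat U_n) \cvproba 0$. Since $\hat U_n$ is the $L^2$-orthogonal projection of $U_n$ onto sums of functions of a single $X_{n,i}$ (a property that uses only the independence of the $X_{n,i}$), we have $\var{U_n - \hat U_n} = \var{U_n} - \var{\hat U_n}$. I would then expand $\var{U_n}$ as a sum of covariances $\mathrm{Cov}(h(X_{n,i}, X_{n,j}), h(X_{n,k}, X_{n,l}))$ and group them by the number of indices the two pairs share: pairs sharing no index are independent and contribute zero; pairs sharing one index reassemble at leading order into $\var{\hat U_n}$; pairs sharing both indices contribute $O(1/n^2)$. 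Hence $\var{U_n - \hat U_n} = O(1/n^2)$, so $\sqrt{n}(U_n - \hat U_n) \to 0$ in $L^2$, and Slutsky's lemma gives the stated conclusion.

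The main obstacle will be exactly this last combinatorial step: because each of the two pairs in any covariance can independently be within-$P$, within-$Q$, or cross, there are nine combinations of pair types to cross-reference against the three possible index-overlap patterns, and one must verify that the one-index-shared contributions reassemble exactly into $\var{\hat U_n}$ with no leading-order residue. The non-identically distributed triangular-array setting means that standard textbook formulas for $\var{U_n}$ cannot be invoked as a black box and must be re-derived with careful attention to which segment each index falls in; once that bookkeeping is dispatched, the rest (the H\'ajek projection formula and Lindeberg--L\'evy) is essentially standard.
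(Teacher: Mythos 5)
Your proposal is correct and follows essentially the same route as the paper: split $U_n$ by segment into within-$P$, within-$Q$ and cross blocks, extract the linear (H\'ajek/Hoeffding) part, apply a triangular-array CLT segment-wise and add the two independent limits, and kill the degenerate remainder by the shared-index counting argument that shows its variance is $\bigo{n^{-2}}$ (this is exactly the paper's Steps 1--3 together with Lemmas~\ref{lemma:median:variance-ABC} and~\ref{lemma:median:variance-RABC}, the only organizational difference being that the paper performs the $H$-decomposition block by block rather than projecting $U_n$ globally and invoking $\var{U_n}-\var{\hat U_n}$). One small repair: the projection should be centered at $\expec{U_n}$ rather than at $\theta$, since these differ by a deterministic $\bigo{1/n}$ (the paper's $s_n$ term) and summing $\expec{U_n}-\theta$ over $n$ indices inside your displayed definition of $\hat U_n-\theta$ would otherwise leave an $\bigo{1}$ drift; your explicit formula for the summands is in fact the correctly centered one, so it suffices to note separately that $\sqrt{n}\left(\expec{U_n}-\theta\right)\to 0$.
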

\begin{proof}[Proof (sketch)]
The idea of the proof is the following: (i) split $U_n$ in three terms depending on the relative position of the indices; (ii) write down the Hoeffding decomposition of each of these terms; (iii) show that the remainders are negligible, and (iv) conclude thanks to the central limit theorem for triangular arrays.
The complete proof can be found in Appendix \ref{sec:median:proof-clt-ustat}.
\end{proof}

We make the following remarks.
\begin{itemize}
\item
Central limit theorems for $U$-statistics are known since the fundamental article of~\citet{Hoe:1948}. 
Prop.~\ref{prop:median:clt-ustat} is in the line of such results.
An asymptotic normality result also exists in the non-identically distributed case; see \citet[Th. 8.1]{Hoe:1948}.
However, this result was not applicable in our setting.
The material in \citet{Jam_Jan:1986} covers the case of a triangular array scheme, but does not cover the non-identically distributed setting.
Results regarding \emph{two-sample\/} $U$-statistics are closest in spirit but not directly applicable; see~\citet[Section~12.2]{Vaa:1998} for an introduction and~\citet{Deh_Fri:2011} for recent developments.
With our notation, the two-sample $U$-statistic is written as $(\alpha n(1-\alpha)n)^{-1}\sum_{i=1}^{\alpha n}\sum_{j=\alpha n + 1}^n h(X_{n,i},X_{n,j})$. 
The sole difference is the absence of ``intra-segment'' interactions: the previous display does not contain terms in $h(X_{n,i},X_{n,j})$ with $i$ and~$j$ in the same segment.
It is the existence of these terms in our case which complicates the analysis.

\item
Suppose that $h$ is degenerate, that is, $ \Expec h(X,y)=\Expec h(x,Y)=0$ for $x, y \in \mathcal{X}$.
Then the variance term in Eq.~\eqref{eq:median:clt-ustat} is zero, and Proposition~\ref{prop:median:clt-ustat} remains true in the following sense: $\sqrt{n}(U_n-\theta)$ converges towards the constant $0$, which is a degenerate Gaussian distribution $\gaussian{0}{0}$.
In this case, we believe that the convergence will be faster, but not toward a Gaussian distribution.
We refer to \citet[Section~3.2.2]{Lee:1990} for results in this direction.

\item
It is possible to prove a version of Prop.~\ref{prop:median:clt-ustat} for the multiple change-point setting. 
	The proof follows the lines of Section~\ref{sec:median:proof-clt-ustat}, with an additional technical difficulty due to the numerous inter-segment interactions---we only deal with one in the present work.
\end{itemize}

\subsection{Asymptotic normality of the squared sample median}

We now turn to the statement of our main result.
In the previous section, we only obtained the convergence of the empirical distribution function.
It is well-known that such a result implies the convergence of the empirical quantiles towards the theoretical quantiles of the target distribution, if the convergence of the empirical distribution function is ``strong enough'' \citep[Chapter~21]{Vaa:1998}, {\em i.e.}, if the convergence is uniform or a CLT---as it is the case in our setting.

\begin{proposition}[\textbf{Asymptotic normality of~$\Sqmedh_n$}]
	\label{prop:median:clt-median-heuristic}
	Suppose that Assumption~\ref{assump:median:distribution} holds, and define~$T$ as in Section~\ref{sec:median:ecdf}.
	Define $\Medvalue = \Theomed(T) = F^{-1}(1/2)$ the theoretical median of the target distribution, and suppose that~$\Cdf$ has a non-zero derivative at~$\Medvalue$.
    Define $\sigma=\sigma(h,P,Q)$ as in Eq.~\eqref{eq:median:def-variance}, where $h(x,y)=\indic{\norm{x-y}^2 \leq \Medvalue}$. 
    Then, for $H_n$ defined in Eq.~\eqref{eq:def:median-heuristic}, we have
	\begin{equation}
	\label{eq:median:clt-median-heuristic}
	\sqrt{n}(\Sqmedh_n - \Medvalue) \cvlaw \gaussian{0}{\frac{\sigma^2}{\Cdf'(\Medvalue)^2}}
	\, .
	\end{equation}
\end{proposition}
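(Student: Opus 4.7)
The plan is to deduce the asymptotic normality of $\Sqmedh_n = \Ecdf_n^{-1}(1/2)$ from the pointwise CLT for the $U$-statistic $\Ecdf_n(\Medvalue)$ supplied by Proposition~\ref{prop:median:clt-ustat}, followed by an inversion argument classical for sample quantiles.

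First I would apply Proposition~\ref{prop:median:clt-ustat} with the kernel $h_t(x,y) = \indic{\norm{x-y}^2 \leq t}$ for $t \in \Reals$. This kernel is bounded and hence lies in the required $L^2$ spaces; the associated $U$-statistic $U_n$ coincides with $\Ecdf_n(t)$; and unpacking Eq.~\eqref{eq:def-theta} one obtains $\theta = \cdf{t}$. Applied at $t = \Medvalue$, where $\cdf{\Medvalue} = 1/2$, the proposition delivers
\[
\sqrt{n}\bigl(\Ecdf_n(\Medvalue) - \tfrac{1}{2}\bigr) \cvlaw \gaussian{0}{\sigma^2}
\, ,
\]
with $\sigma^2$ exactly as in the statement.

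The inversion step proceeds via the standard identity $\Sqmedh_n \leq t \Leftrightarrow \Ecdf_n(t) \geq 1/2$, valid for the generalized inverse of a right-continuous non-decreasing function. For any $x \in \Reals$, this gives
\[
\proba{\sqrt{n}(\Sqmedh_n - \Medvalue) \leq x} = \proba{\sqrt{n}\bigl(\Ecdf_n(\Medvalue + x/\sqrt{n}) - \cdf{\Medvalue + x/\sqrt{n}}\bigr) \geq -\sqrt{n}\bigl(\cdf{\Medvalue + x/\sqrt{n}} - \tfrac{1}{2}\bigr)}
\, .
\]
The deterministic right-hand side tends to $-\Cdf'(\Medvalue)\,x$ by the differentiability of $\Cdf$ at $\Medvalue$. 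Provided one can replace the random increment $\Ecdf_n(\Medvalue + x/\sqrt{n}) - \cdf{\Medvalue + x/\sqrt{n}}$ by $\Ecdf_n(\Medvalue) - \cdf{\Medvalue}$ at cost $\littleo{n^{-1/2}}$ in probability, Slutsky's lemma together with the displayed CLT identify the limit as $\proba{\gaussian{0}{\sigma^2} \geq -\Cdf'(\Medvalue)\,x} = \proba{\gaussian{0}{\sigma^2/\Cdf'(\Medvalue)^2} \leq x}$, which is Eq.~\eqref{eq:median:clt-median-heuristic}.

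The main remaining task is this stochastic equicontinuity of the $U$-process $t \mapsto \sqrt{n}(\Ecdf_n(t) - \cdf{t})$ at $\Medvalue$. I would settle it directly by a second-moment bound on the centered $U$-statistic with bounded kernel $h_{\Medvalue + x/\sqrt{n}} - h_\Medvalue$: its $L^2$ norm is $\bigo{n^{-1/4}}$ by differentiability of $\Cdf$, and standard variance bounds for non-identically distributed $U$-statistics then yield variance $\bigo{n^{-3/2}}$ for the $U$-statistic, hence $\bigo{n^{-1/2}} \to 0$ after multiplication by $\sqrt{n}$, whereupon Chebyshev's inequality concludes. A slicker alternative---once Proposition~\ref{prop:median:clt-ustat} is strengthened to a functional CLT for $\{\Ecdf_n(t)\}_{t \in \Reals}$---is to invoke the Hadamard differentiability of the quantile map at any $\Cdf$ with positive derivative at its median \citep[Lemma~21.3]{Vaa:1998} and apply the functional delta method, which delivers Eq.~\eqref{eq:median:clt-median-heuristic} in one line.
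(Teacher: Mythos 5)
Your proposal is correct and follows essentially the same route as the paper: rewrite $\{\sqrt{n}(\Sqmedh_n-\Medvalue)\leq x\}$ via the generalized-inverse identity, Taylor-expand the deterministic side to $-\Cdf'(\Medvalue)x$, apply Proposition~\ref{prop:median:clt-ustat} at $t=\Medvalue$, and kill the increment $\Ecdf_n(\Medvalue+x/\sqrt{n})-\Ecdf_n(\Medvalue)$ by a second-moment/Chebyshev bound on the $U$-statistic with the thin-slab kernel $\indic{\Medvalue<\norm{x-y}^2\leq\Medvalue+x/\sqrt{n}}$ --- exactly the computation the paper performs through the Hoeffding decomposition and Jensen's inequality. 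The functional-delta-method alternative you mention in passing is not what the paper does and would require a process-level strengthening of Proposition~\ref{prop:median:clt-ustat} that is not established there.
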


Before providing a sketch of the proof of Prop.~\ref{prop:median:clt-median-heuristic}, we make a few remarks.

\begin{itemize}
\item
Empirical $U$-quantiles are known to satisfy asymptotic normality in the i.i.d.~case \citep{Ser:1980}.
Although a lot of work has been done to relax the independence assumption, however, there is no result regarding the non-identically distributed case.
In the two-sample setting, some results exist, both in the independent case~\citep{Leh:1951} and with some dependence structure~\citep{Deh_Fri:2011}.
Nevertheless, as noted before, in our setting it is necessary to consider the intra-segment interactions, which complicates the analysis.

\item
As it can be seen for instance in Figure~\ref{fig:median:ecdf-example}, observations lying between \linebreak $\max(\Expec T_{XX},\Expec T_{YY})$ and $\Expec T_{XY}$ may be very scarce.
In such a case, it is possible for~$\Cdf'(\Medvalue)$ to be small, leading to a large variance term in Eq.~\eqref{eq:median:clt-median-heuristic}.
Note however that $\Cdf'(\Medvalue)\neq 0$ for arbitrary continuous distributions.
\end{itemize}

\begin{proof}[Proof sketch]
Set $t\in\Reals$.
The general idea of the proof is to rewrite statements about the event $\bigl\{\sqrt{n} \left(\Sqmedh_n-\Medvalue\right)\leq t\bigr\}$ as statements about a sum of $U$-statistics.
We then control these $U$-statistics with Prop.~\ref{prop:median:clt-ustat} for  conveniently chosen~$h$, and conclude with Slutsky's Lemma.
Throughout this proof, we only suppose that $p\in (0,1)$ to emphasize that Prop.~\ref{prop:median:clt-median-heuristic} can be extended to \emph{any\/} quantile, not only the median.
The complete proof can be found in Appendix \ref{sec:proof-clt-median-heuristic}.
\end{proof}

\section{Empirical investigation}
\label{sec:example}

We empirically compare the median heuristic and an approach based on the maximization of test power \citep{Gre_Sej_Str:2012} in the setting of two-sample test.
The purpose is to gain insights about when the median heuristic performs reasonably and when it could work poorly.

We consider the following setting.
We focus on the use of the Gaussian kernel $k_{\Bandwidth}(x,y) \defeq \exp{\frac{-\norm{x-y}^2}{2\Bandwidth^2}}$, and consider the selection of bandwidth $\Bandwidth > 0$.
We consider the following two scenarios for the distributions~$P$ and~$Q$.
(i) {\bf Mean}: A \emph{change in the mean} of a univariate Gaussian distribution: $P\sim \gaussian{0}{1}$ and $Q\sim \gaussian{\mu}{1}$ with $\mu >0$;
(ii) {\bf Var}: A \emph{change in the variance} of a univariate Gaussian distribution: $P\sim\gaussian{0}{1}$ and $Q\sim \gaussian{0}{\sigma^2}$ with $\sigma >1$.
In both these scenarios, we choose $\alpha=1/2$ in order to be able to compute the linear-time estimate $\linMMD$. 

Since the kernel $k_\nu$ and distributions~$P$ and~$Q$ are Gaussian, it is possible to compute the cumulative distribution function $F_T$ of~$T$ exactly, and thus to have access to the ``theoretical'' value of the median heuristic given by solving in~$t$ the equation $F(t)=1/2$. 
This is done by solving this equation $F(t)=1/2$ in each scenario (see Section~\ref{sec:cdf-derivation}), which gives rise to bandwidth choices $\bdMedMean$ and $\bdMedVar$---which are functions of $\mu$ and $\sigma$, respectively. 
In this way, we suppress the noise coming from choosing empirically the median of the pairwise distance and obtain some fast, precise insights.

Comparisons are made with the bandwidth selected by the maximization of test power \citep{Gre_Sej_Str:2012}, where the criterion is defined as the (squared) MMD divided by a certain variance. 
We show in Section~\ref{sec:power-criterion} of the Appendix that, in both scenarios, it is possible to derive the power ratio criterion in closed-form, both for $\quadMMD$ and $\linMMD$. 
We denote these ratios by~$\ratioQuad$ and~$\ratioLin$ respectively. 
Let~$\bdCritMeanQuad$ and~$\bdCritVarQuad$ be the bandwidths selected the maximization by~$\ratioQuad$ in scenarios {\bf Mean} and {\bf Var} respectively, and~$\bdCritMeanLin$ and~$\bdCritVarLin$ be those by the maximization of~$\ratioLin$.
We now dispose of two different test statistics, the quadratic-time statistic $\quadMMD$ and the linear-time statistic $\linMMD$.
To each of these statistics corresponds two different way to choose the bandwidth: on one side the value given by the theoretical median, on the other side the value that maximizes the corresponding power ratio criterion, giving rise to four different tests. 

\paragraph{Comparison of bandwidths selected by different approaches.}

\begin{figure}[t!]
\centering 
\includegraphics[width=0.49\linewidth]{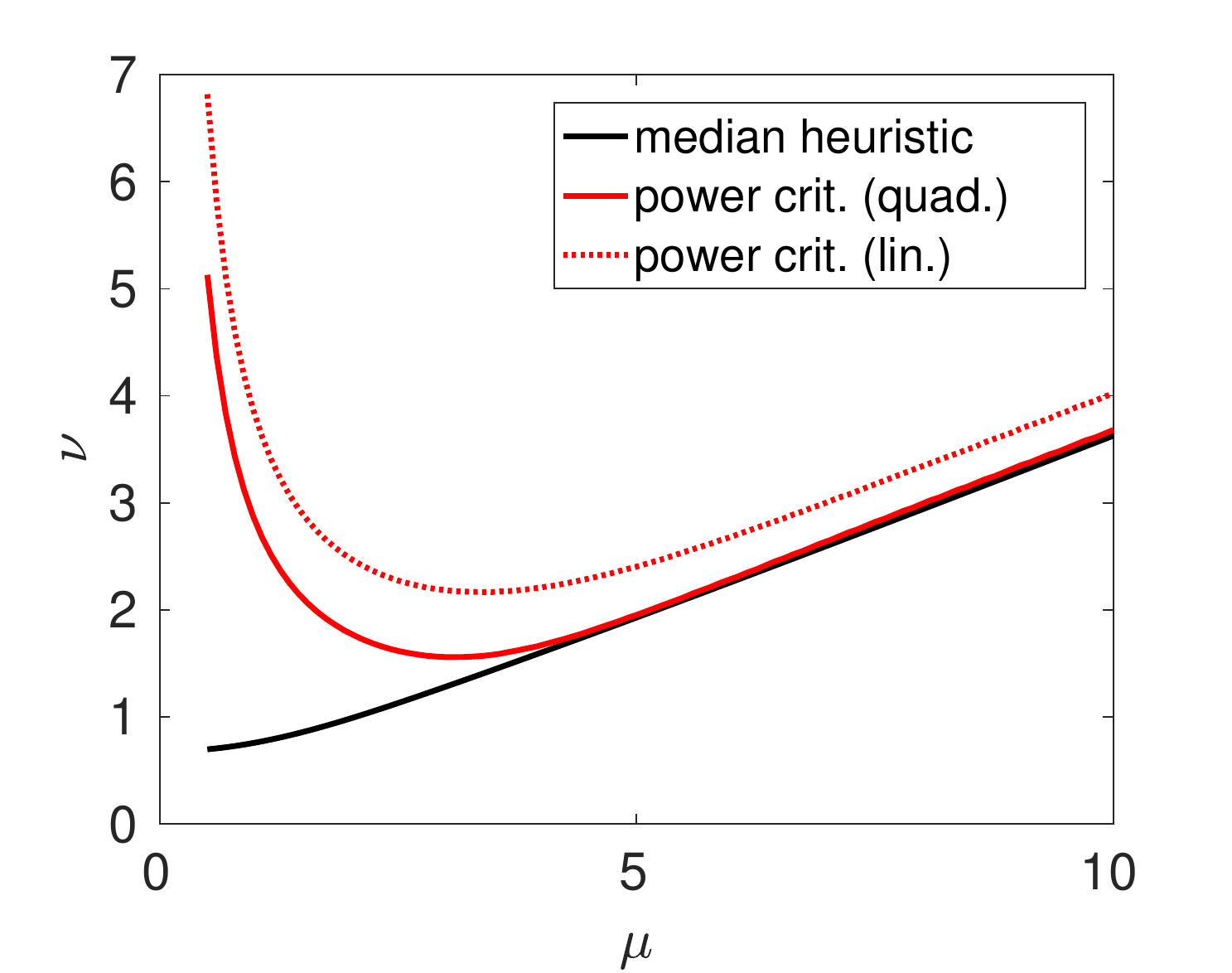}
\hspace{-0.5cm}
\includegraphics[width=0.49\linewidth]{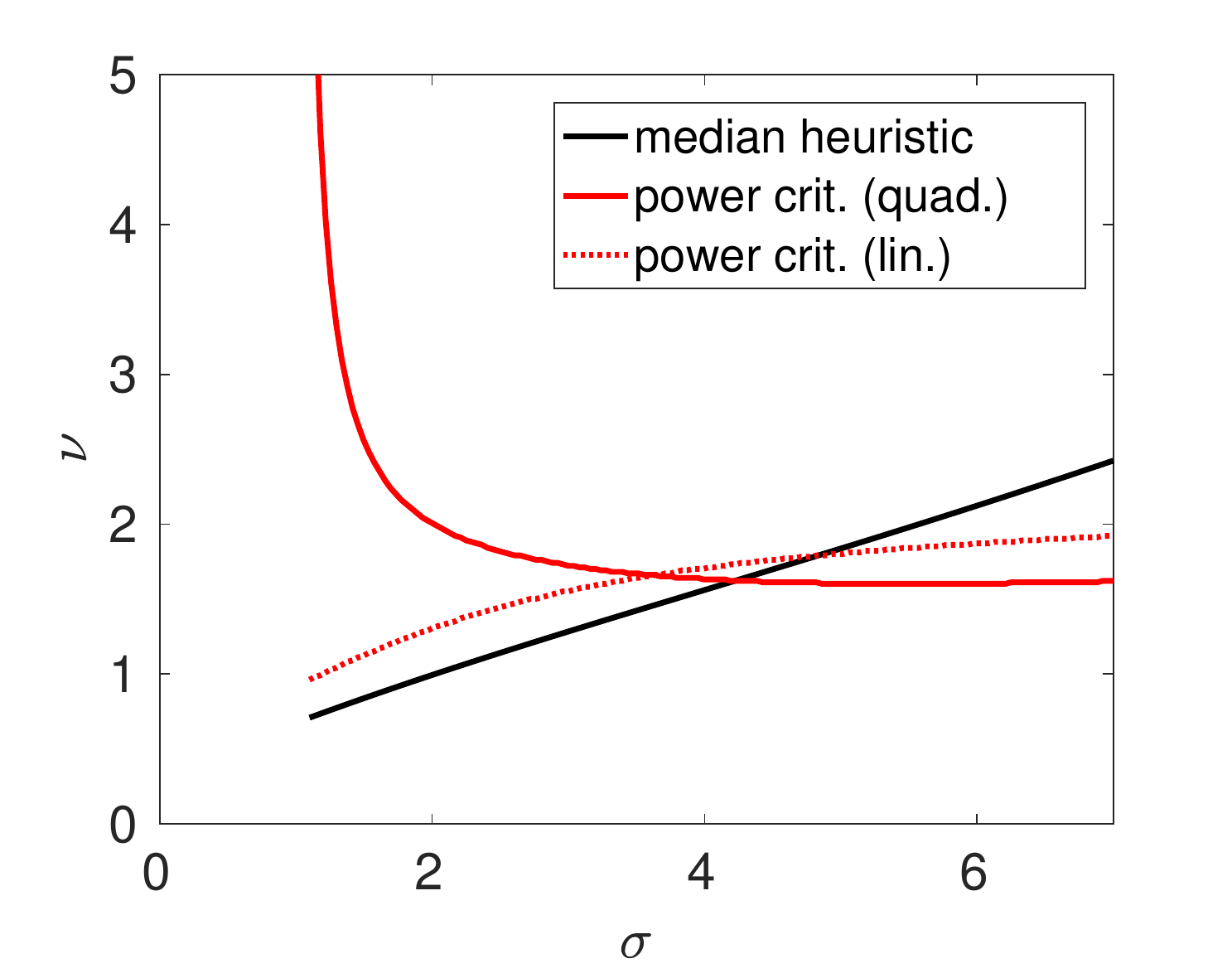}
\vspace{-0.1in}
\caption{\label{fig:comp-bd}
Gaussian kernel bandwidth selected by different means.
The \emph{left} panel corresponds to the {\bf Mean} scenario, where we vary the mean~$\mu$ of~$Q$. 
The \emph{right} panel corresponds to the {\bf Var} scenario, where we vary the variance~$\sigma^2$ of~$Q$.
For each plot, the vertical axis depicts the values of selected bandwidths. 
The black curves are obtained by computing the theoretical median, the red curves by maximizing $\ratioQuad$ (the power criterion with quadratic-time MMD), and the red dotted curves by maximizing $\ratioLin$ (the power criterion with linear-time MMD).
}
\end{figure}

Figure~\ref{fig:comp-bd} shows bandwidths selected by different approaches in the two scenarios. 
In scenario {\bf Mean}, for large values of~$\mu$, the values of~$\bdMedMean$ chosen by the median heuristic are almost identical to the values of~$\bdCritMeanQuad$ selected by the power ratio maximization with quadratic-time MMD statistics, and are parallel with $\bdCritMeanLin$ of linear-time MMD. 
We do not have a theoretical explanation for this phenomenon, which looks like a remarkable coincidence in view of the way~$\bdMedMean$ and~$\bdCritMeanQuad$ are computed---see Section~\ref{sec:cdf-derivation} and~\ref{sec:power-criterion} in the Appendix. 
On the other hand, in the {\bf Var} scenario, this behavior cannot be observed, supporting the common sense that the median heuristic may not be always the best choice, selecting a bandwidth too large for the scale of the changes.

\paragraph{Comparison of approximate Bahadur slopes.}

We turn to a measure of performance of a testing procedure, the approximate Bahadur slope \citep{Bah:1960}. 
Intuitively, this slope is the rate of convergence of $p$-values to~$0$ as~$n$ increases. 
Therefore, the higher the slope, the faster the $p$-value vanishes, and thus the lower the sample size required to reject $H_0$ under~$\theta$ (\emph{the higher the better}). 
This was first used in the context of kernel two-sample test by \citet{Jit_Xu_Sza:2017}.
We choose the ABS as a measure of quality of the two-sample testing procedures associated to the different bandwidths choices that we exposed.

Let us give a formal definition of ABS for a general testing procedure between $H_0:\theta\in \Theta_0$ and $H_1:\theta\in\Theta\setminus\Theta_0$. 
Let~$T_n$ be the associated test statistic, and denote by~$F$ the asymptotic distribution of~$T_n$ under the null. 
Assume that~$F$ is continuous and common to all $\theta_0\in\Theta_0$. 
Suppose that there exists a continuous strictly increasing function $R:\Reals_+\to\Reals_+$ such that $\lim_{n\to +\infty}R(n)=+\infty$ and $-2\log(1-F(T_n))/R(n) \cvproba c(\theta)$ where $T_n$ is drawn under $\theta$, for some function $c$ that is positive on $\Theta\setminus \Theta_0$ and identically zero on~$\Theta_0$.
Then the function~$c$ is known as the \emph{approximate Bahadur slope} (ABS) of~$T_n$. 
As it turns out, in our setting, it is possible to compute the ABS associated to~$\quadMMD$ and~$\linMMD$:

\begin{proposition}[\textbf{Approximate Bahadur Slope computations}]
\label{prop:abs-computation}
The ABS of $n\quadMMD^2$ is $\frac{\MMD^2}{4\lambda_1}$, where $\lambda_1$ is the largest eigenvalue of the centered Gaussian kernel integral operator.
The ABS of $\sqrt{n}\linMMD^2$ is $\frac{\MMD^4}{8\sigma_{\ell}^2}$, where $\sigma_{\ell}^2$ is the asymptotic variance of $\linMMD^2$ (Eq.~\eqref{eq:lin-variance} in the Appendix). 
\end{proposition}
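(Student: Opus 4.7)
The plan is to follow, for each of the two statistics, the standard Bahadur recipe: identify the asymptotic null cdf $F$ of the test statistic $T_n$, compute the right-tail decay $-\log(1-F(t))$ as $t\to+\infty$ (this is the denominator of $c(\theta)$), establish the rate at which $T_n$ grows under the alternative (this is the numerator), and combine the two via $-2\log(1-F(T_n))/R(n) \cvproba c(\theta)$ with $R(n)=n$ in both cases.

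For the quadratic-time statistic, I would use that $\quadMMD^2$ is a \emph{degenerate} U-statistic under $H_0\colon P=Q$, so the classical spectral theorem for degenerate U-statistics gives $n\quadMMD^2 \cvlaw \sum_{l\ge 1}\lambda_l(Z_l^2 - c_0)$, where $(\lambda_l)_{l\ge 1}$ are the eigenvalues of the integral operator associated with the centered Gaussian kernel relative to $P$, $(Z_l)_{l\ge 1}$ are i.i.d.\ standard normal, and $c_0$ is an explicit centering constant. The moment-generating function of this limit factorises as $\prod_{l\ge 1}(1 - 2s\lambda_l)^{-1/2}\exp{-s c_0 \lambda_l}$ with rightmost singularity at a point proportional to $1/\lambda_1$, so either a saddle-point Laplace-inversion around that pole or a Chernoff bound at the optimal $s$ provides a sharp tail estimate of the form $-\log(1-F(t)) \sim t/(c\lambda_1)$. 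Under $H_1$, non-degeneracy of the kernel and the LLN for U-statistics yield $\quadMMD^2 \cvproba \MMD^2$, so $T_n = n\quadMMD^2$ grows linearly in $n$. Plugging the two pieces into the template gives an answer proportional to $\MMD^2/\lambda_1$; carefully tracking the centering $c_0$ and the Gaussian-variance factor $2$ then pins down the coefficient $1/(4\lambda_1)$.

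For the linear-time statistic, I would exploit that $\linMMD^2$ is already an average of $M/2$ i.i.d.\ random variables, so the Lindeberg--L\'evy CLT applies: under $H_0$, $\sqrt{n}\linMMD^2 \cvlaw \gaussian{0}{\sigma_\ell^2}$, i.e.\ $F$ is Gaussian, and Mills' ratio produces the standard tail expansion $-\log(1-F(t)) = t^2/(2\sigma_\ell^2)(1+o(1))$. Under $H_1$ the same CLT (now with non-zero mean) yields $\linMMD^2 \cvproba \MMD^2$, so $T_n = \sqrt{n}\linMMD^2$ grows as $\sqrt{n}$. Inserting these two expansions into $-2\log(1-F(T_n))/n$ yields a limit proportional to $\MMD^4/\sigma_\ell^2$; tracking the subsampling factor that relates the variance of an individual summand in the definition of $\linMMD^2$ to the CLT variance $\sigma_\ell^2$ then produces the stated coefficient $1/(8\sigma_\ell^2)$.

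The main obstacle I foresee is the quadratic case, specifically producing a \emph{sharp} exponential rate for the right tail of an infinite weighted sum of chi-squares. I would handle this either by a saddle-point analysis at the dominant pole of the MGF, or, more elementarily, by a spectrum-truncation Chernoff bound with the optimal choice of $s$, establishing separately that the contribution of $(\lambda_l)_{l\ge 2}$ is of smaller exponential order than that of $\lambda_1$. The linear case, in contrast, is essentially immediate once the CLT has been recorded and the summand variance has been identified with $\sigma_\ell^2$.
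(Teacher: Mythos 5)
Your plan is correct in outline and follows the same Bahadur-slope recipe as the paper: identify the asymptotic null law $F$, quantify its upper-tail decay, show $T_n/R(n)\cvproba\MMD^2$ under the alternative, and combine. The difference is that you propose to establish the tail asymptotics from scratch, whereas the paper delegates exactly those steps to cited results: it places the null limits into Bahadur's classes $\Bclass{a}{b}$ (Gaussian tails via Bahadur, 1960; the infinite weighted sum of chi-squares via Th.~11 of Jitkrittum et al., 2017, which is precisely what your saddle-point/Chernoff analysis at the dominant pole $s=1/(2\lambda_1)$ of the MGF would re-derive), and then concludes by Th.~9 of the same reference instead of a hand computation. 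Your route is more self-contained but must additionally argue that the eigenvalues $\lambda_2,\lambda_3,\dots$ (and a possibly multiple top eigenvalue) contribute only a polynomial prefactor to the tail, which does not affect the log-asymptotics; this is doable since the Gaussian-kernel integral operator is trace class. The one substantive gap in your sketch is the constants, which you defer to ``tracking'': the coefficient $1/(4\lambda_1)$ arises because the two-sample null limit is $4\sum_{\ell}\lambda_{\ell}(c_{\ell}^2-1)$ --- the factor $4$ being the variance of the difference of the two independent $\gaussian{0}{2}$ components in Th.~12 of Gretton et al.\ (2012) --- so the tail decays like $\exp{-t/(8\lambda_1)}$; and the coefficient $1/(8\sigma_{\ell}^2)$ requires the null variance of $\sqrt{n}\linMMD^2$ to be $8\sigma_{\ell}^2$, not $\sigma_{\ell}^2$ as written in your CLT statement (see Eq.~(5) of Gretton, Sejdinovic et al., 2012, with the convention of Eq.~\eqref{eq:lin-variance}). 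Neither constant can be recovered from the limits as you displayed them, so these two normalizations must be imported correctly for the stated slopes to come out; once they are, both of your arguments go through.
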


We refer to Section~\ref{sec:proof-abs-computation} of the Appendix for the proof of Prop.~\ref{prop:abs-computation}, which is an adaptation of the proof of Th.~5 in \citet{Jit_Xu_Sza:2017}.
In the computation of the ABS for $\quadMMD^2$, we estimate~$\lambda_1$ empirically, following \citet[Eq.~(5)]{Gre_Fuk_Har:2009}.

\begin{figure}[t!]
\centering 
\includegraphics[width=0.49\linewidth]{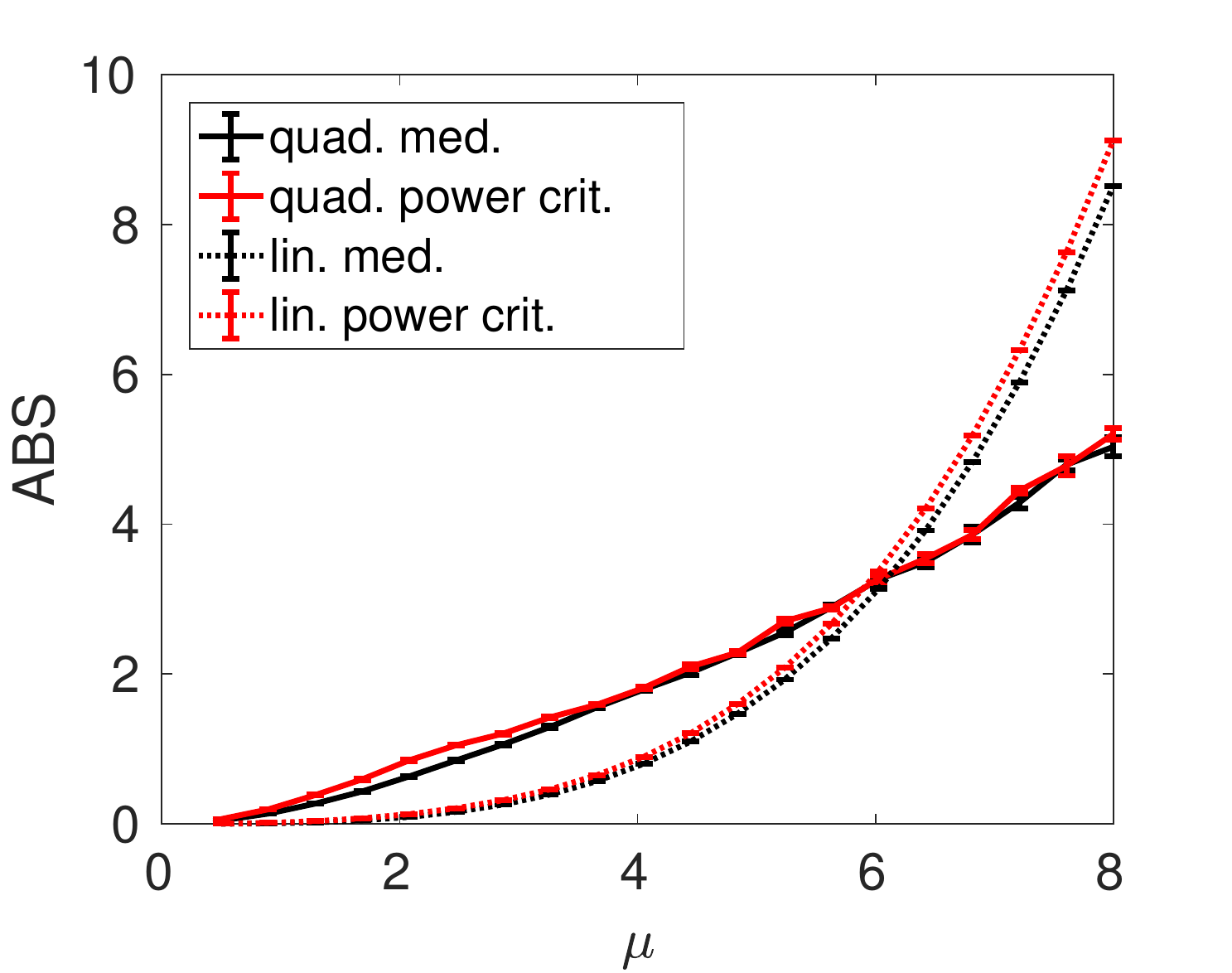}
\hspace{-0.5cm}
\includegraphics[width=0.49\linewidth]{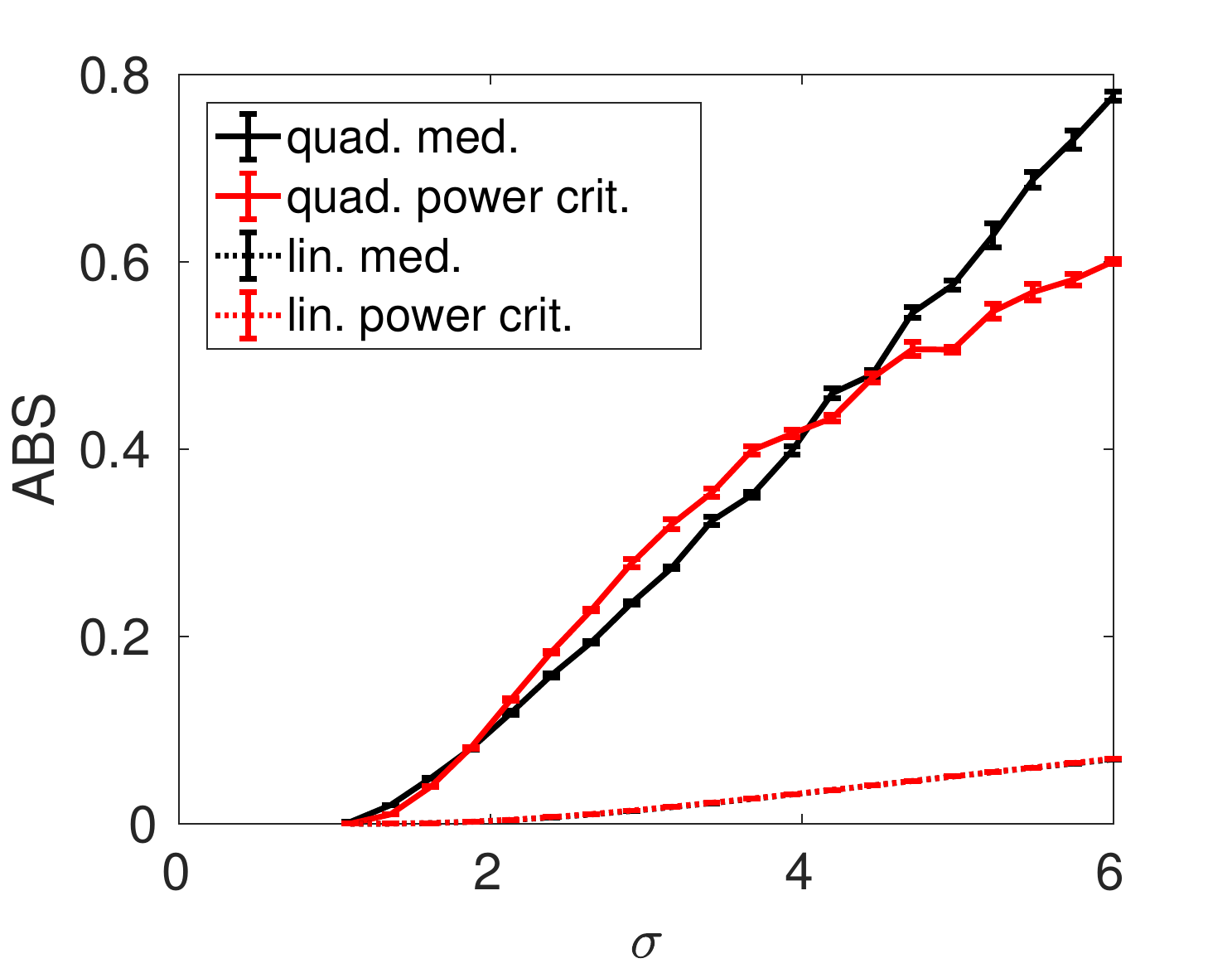}
\vspace{-0.1in}
\caption{\label{fig:abs-res}In this figure we plot the ABS of the different tests in the {\bf Mean} (\emph{left panel}) and {\bf Var} (\emph{right panel}) scenarios.
The black lines correspond to the $\quadMMD$ test with the median heuristic, the red lines to the $\quadMMD$ test with bandwidth chosen by maximizing the power criterion. 
Dotted lines correspond to the $\linMMD$ test.
We report error bars corresponding to the standard deviation over~$5$ experiments---the randomness comes from the estimation of~$\lambda_1$ from $10^3$ sample points.
}
\end{figure}

Figure~\ref{fig:abs-res} describes the approximate Bahadur slopes for the considered approaches.
It shows that, for scenario {\bf Mean} (left panel), the median heuristic provides almost identical slopes to that of the corresponding power-maximization approaches, as suggested by the results in Figure~\ref{fig:comp-bd}.
In scenario {\bf Var} (right panel), it is interesting to see that the slope by the median heuristic is almost identical to that of the power-maximization approach with linear-time MMD statistic.
A similar behavior can be observed for the quadratic time statistic, for relatively small values of the variance.

\section{Conclusion and future directions}

In this article, we partly explained the behavior of the median heuristic for large sample size, providing a convergence analysis in the setting of kernel two-sample test.
We believe that our work serves as a step towards a deeper understanding of the median heuristic.
In particular, it should open the door to more rigorous statements regarding the optimality of bandwidth choice in kernel two-sample test. 

As future work, we first aim to explain the interesting behaviors observed in Figures~\ref{fig:comp-bd} and~\ref{fig:abs-res}. 
Another direction for research is the case where~$\alpha$ depends on~$n$. 
For instance, one could be interested in a situation where we observe asymptotically much less samples of~$X$ than samples of~$Y$.
Finally, it would be extremely interesting to perform a similar analysis in machine learning settings other than kernel two-sample test. 
In particular, we suspect that for many tasks where the distributions at hand are not ``well-separated,'' the median heuristic leads to higher bandwidth than needed and is therefore a poor choice. 

\bibliographystyle{abbrvnatalt}
\bibliography{median}

\begin{thebibliography}{42}
\providecommand{\natexlab}[1]{#1}
\providecommand{\url}[1]{\texttt{#1}}
\expandafter\ifx\csname urlstyle\endcsname\relax
  \providecommand{\doi}[1]{doi: #1}\else
  \providecommand{\doi}{doi: \begingroup \urlstyle{rm}\Url}\fi

\bibitem[Abramowitz and Stegun(1964)]{Abr_Ste:1964}
Abramowitz, M. and Stegun, I. (1964).
\newblock \emph{Handbook of mathematical functions: with formulas, graphs, and
  mathematical tables}. Courier Corporation.

\bibitem[Aizerman et~al.(1964)Aizerman, Braverman, and
  Rozonoer]{Aiz_Bra_Roz:1964}
Aizerman, M., Braverman, E., and Rozonoer, L. (1964).
\newblock Theoretical foundations of the potential function method in pattern
  recognition.
\newblock \emph{Automation and Remote Control}, {\bf 25}:\penalty0 917--936.

\bibitem[Arlot et~al.(2012)Arlot, Celisse, and Harchaoui]{Arl_Cel_Har:2012}
Arlot, S., Celisse, A., and Harchaoui, Z. (2012).
\newblock A kernel multiple change-point algorithm via model selection.
\newblock ArXiV preprint, available at \url{https://arxiv.org/abs/1202.3878v2}.

\bibitem[Bach and Jordan(2002)]{BacJor02}
Bach, F. and Jordan, M.~I. (2002).
\newblock Kernel independent component analysis.
\newblock \emph{Journal of Machine Learning Research}, {\bf 3}\penalty0
  (7):\penalty0 1--48.

\bibitem[Bahadur(1960)]{Bah:1960}
Bahadur, R. (1960).
\newblock Stochastic comparison of tests.
\newblock \emph{The Annals of Mathematical Statistics}, {\bf 31}\penalty0
  (2):\penalty0 276--295.

\bibitem[Billingsley(2012)]{Bil:2012}
Billingsley, P. (2012).
\newblock \emph{Probability and Measure},   Wiley Series in Probability and
  Statistics. John Wiley \& Sons, Inc., Hoboken (NJ), third edition.

\bibitem[Boser et~al.(1992)Boser, Guyon, and Vapnik]{Bos_Guy_Vap:1992}
Boser, B.~E., Guyon, I.~M., and Vapnik, V.~N. (1992).
\newblock A training algorithm for optimal margin classifiers.
\newblock In \emph{Proceedings of the 5th annual workshop on Computational
  Learning Theory}, pp. 144--152. ACM Press.

\bibitem[Dehling and Fried(2012)]{Deh_Fri:2011}
Dehling, H. and Fried, R. (2012).
\newblock Asymptotic distribution of two-sample empirical {U}-quantiles with
  applications to robust tests for shifts in location.
\newblock \emph{Journal of Multivariate Analysis}, {\bf 105}\penalty0
  (1):\penalty0 124--140.

\bibitem[Flaxman et~al.(2016)Flaxman, Sejdinovic, Cunningham, and
  Filippi]{Fla_Sej_Cun:2016}
Flaxman, S., Sejdinovic, D., Cunningham, J.~P., and Filippi, S. (2016).
\newblock Bayesian learning of kernel embeddings.
\newblock In \emph{Proceedings of the 32th Conference on Uncertainty in
  Artificial Intelligence}, pp. 182--191.

\bibitem[Garreau and Arlot(2016)]{Gar_Arl:2016}
Garreau, D. and Arlot, S. (2016).
\newblock Consistent change-point detection with kernels.
\newblock ArXiV preprint, available at \url{http://arxiv.org/abs/1612.04740v3}.

\bibitem[Genton(2001)]{Gen:2001}
Genton, M.~G. (2001).
\newblock Classes of kernels for machine learning: a statistics perspective.
\newblock \emph{Journal of Machine Learning Research}, {\bf 2}\penalty0
  (12):\penalty0 299--312.

\bibitem[Gretton et~al.(2007)Gretton, Borgwardt, Rasch, Sch{\"o}lkopf, and
  Smola]{Gre_Bor_Ras:2006}
Gretton, A., Borgwardt, K.~M., Rasch, M., Sch{\"o}lkopf, B., and Smola, A.~J.
  (2007).
\newblock A kernel method for the two-sample-problem.
\newblock In \emph{Advances in Neural Information Processing Systems 19}, pp.
  513--520. MIT Press.

\bibitem[Gretton et~al.(2009)Gretton, Fukumizu, Harchaoui, and
  Sriperumbudur]{Gre_Fuk_Har:2009}
Gretton, A., Fukumizu, K., Harchaoui, Z., and Sriperumbudur, B. (2009).
\newblock A fast, consistent kernel two-sample test.
\newblock In \emph{Advances in Neural Information Processing Systems}, pp.
  673--681.

\bibitem[Gretton et~al.(2012{\natexlab{a}})Gretton, Borgwardt, Rasch,
  Sch{\"o}lkopf, and Smola]{Gre_Bor_Ras:2012}
Gretton, A., Borgwardt, K.~M., Rasch, M.~J., Sch{\"o}lkopf, B., and Smola, A.
  (2012{\natexlab{a}}).
\newblock A kernel two-sample test.
\newblock \emph{Journal of Machine Learning Research}, {\bf 13}\penalty0
  (1):\penalty0 723--773.

\bibitem[Gretton et~al.(2012{\natexlab{b}})Gretton, Sejdinovic, Strathmann,
  Balakrishnan, Pontil, Fukumizu, and Sriperumbudur]{Gre_Sej_Str:2012}
Gretton, A., Sejdinovic, D., Strathmann, H., Balakrishnan, S., Pontil, M.,
  Fukumizu, K., and Sriperumbudur, B.~K. (2012{\natexlab{b}}).
\newblock Optimal kernel choice for large-scale two-sample tests.
\newblock In \emph{Advances in Neural Information Processing Systems 25}, pp.
  1205--1213. Curran Associates, Inc.

\bibitem[Harchaoui and Capp{\'e}(2007)]{Har_Cap:2007}
Harchaoui, Z. and Capp{\'e}, O. (2007).
\newblock Retrospective multiple change-point estimation with kernels.
\newblock In \emph{Proceedings of the 14th IEEE Workshop on Statistical Signal
  Processing}, pp. 768--772.

\bibitem[Hoeffding(1948)]{Hoe:1948}
Hoeffding, W. (1948).
\newblock A class of statistics with asymptotically normal distribution.
\newblock \emph{The Annals of Mathematical Statistics}, {\bf 19}\penalty0
  (3):\penalty0 293--325.

\bibitem[Hoerl and Kennard(1970)]{Hoe_Ken:1970}
Hoerl, A.~E. and Kennard, R.~W. (1970).
\newblock Ridge regression: Biased estimation for nonorthogonal problems.
\newblock \emph{Technometrics}, {\bf 12}\penalty0 (1):\penalty0 55--67.

\bibitem[Jammalamadaka and Janson(1986)]{Jam_Jan:1986}
Jammalamadaka, S.~R. and Janson, S. (1986).
\newblock Limit theorems for a triangular scheme of {U}-statistics with
  applications to inter-point distances.
\newblock \emph{The Annals of Probability}, pp. 1347--1358.

\bibitem[Jitkrittum et~al.(2017{\natexlab{a}})Jitkrittum, Szab{\'o}, and
  Gretton]{Jit_Sza_Gre:2017}
Jitkrittum, W., Szab{\'o}, Z., and Gretton, A. (2017{\natexlab{a}}).
\newblock An adaptive test of independence with analytic kernel embeddings.
\newblock In \emph{Proceedings of the 34th International Conference on Machine
  Learning}.

\bibitem[Jitkrittum et~al.(2017{\natexlab{b}})Jitkrittum, Xu, Szab{\'o},
  Fukumizu, and Gretton]{Jit_Xu_Sza:2017}
Jitkrittum, W., Xu, W., Szab{\'o}, Z., Fukumizu, K., and Gretton, A.
  (2017{\natexlab{b}}).
\newblock A linear-time kernel goodness-of-fit test.
\newblock In \emph{Advances in Neural Information Processing Systems}, pp.
  261--270.

\bibitem[Karatzoglou et~al.(2004)Karatzoglou, Smola, Hornik, and
  Zeileis]{Kar_Smo_Hor:2004}
Karatzoglou, A., Smola, A., Hornik, K., and Zeileis, A. (2004).
\newblock kernlab -- an {S4} package for kernel methods in {R}.
\newblock \emph{Journal of Statistical Software}, {\bf 11}\penalty0
  (9):\penalty0 1--20.

\bibitem[Korolyuk and Borovskich(2013)]{Kor_Bor:2013}
Korolyuk, V.~S. and Borovskich, Y.~V. (2013).
\newblock \emph{Theory of {U}-statistics},   Mathematics and its Applications,
  {\bf 273}. Springer Science \& Business Media.

\bibitem[Lee(1990)]{Lee:1990}
Lee, J. (1990).
\newblock \emph{{U}-statistics: Theory and Practice},   Statistics: Textbooks
  and Monographs, {\bf 110}. Marcel Dekker, Inc.

\bibitem[Lehmann(1951)]{Leh:1951}
Lehmann, E.~L. (1951).
\newblock Consistency and unbiasedness of certain nonparametric tests.
\newblock \emph{The Annals of Mathematical Statistics}, {\bf 22}\penalty0
  (2):\penalty0 165--179.

\bibitem[Li et~al.(2017)Li, Chang, Cheng, Yang, and Poczos]{NIPS2017_6815}
Li, C.-L., Chang, W.-C., Cheng, Y., Yang, Y., and Poczos, B. (2017).
\newblock Mmd gan: Towards deeper understanding of moment matching network.
\newblock In \emph{Advances in Neural Information Processing Systems 30}, pp.
  2203--2213.

\bibitem[Long et~al.(2015)Long, Cao, Wang, and Jordan]{pmlr-v37-long15}
Long, M., Cao, Y., Wang, J., and Jordan, M. (2015).
\newblock Learning transferable features with deep adaptation networks.
\newblock In \emph{Proceedings of the 32nd International Conference on Machine
  Learning},   Proceedings of Machine Learning Research, {\bf 37}, pp. 97--105.
  PMLR.

\bibitem[Mallows(1991)]{Mal:1991}
Mallows, C.~L. (1991).
\newblock Another comment on {O}{'}{C}inneide.
\newblock \emph{The American Statistician}, {\bf 45}\penalty0 (3):\penalty0
  257.

\bibitem[Marcum(1950)]{Mar:1950}
Marcum, J. (1950).
\newblock \emph{Table of {Q} Functions}. Rand Corporation.

\bibitem[Muandet et~al.(2016)Muandet, Sriperumbudur, Fukumizu, Gretton, and
  Sch{\"o}lkopf]{Mua_Sri_Fuk:2016}
Muandet, K., Sriperumbudur, B., Fukumizu, K., Gretton, A., and Sch{\"o}lkopf,
  B. (2016).
\newblock Kernel mean shrinkage estimators.
\newblock \emph{Journal of Machine Learning Research}, {\bf 17}\penalty0
  (48):\penalty0 1--41.

\bibitem[Muandet et~al.(2017)Muandet, Fukumizu, Sriperumbudur, and
  Sch\"{o}lkopf]{MuaFukSriSch17}
Muandet, K., Fukumizu, K., Sriperumbudur, B.~K., and Sch\"{o}lkopf, B. (2017).
\newblock Kernel mean embedding of distributions : A review and beyond.
\newblock \emph{Foundations and Trends in Machine Learning}, {\bf 10}\penalty0
  (1--2):\penalty0 1--141.

\bibitem[Reddi et~al.(2015)Reddi, Ramdas, Poczos, Singh, and
  Wasserman]{Red_Ram_Poc:2015}
Reddi, S.~J., Ramdas, A., Poczos, B., Singh, A., and Wasserman, L. (2015).
\newblock On the decreasing power of kernel and distance based nonparametric
  hypothesis tests in high dimensions.
\newblock In \emph{Proceedings of the 29th AAAI Conference on Artificial
  Intelligence}, pp. 3571--3577.

\bibitem[Sch{\"o}lkopf and Smola(2002)]{Sch_Smo:2002}
Sch{\"o}lkopf, B. and Smola, A.~J. (2002).
\newblock \emph{Learning with Kernels: Support Vector Machines, Regularization,
  Optimization, and Beyond},   Adaptive Computation and Machine Learning. MIT
  Press, Cambridge (MA).

\bibitem[Sch{\"o}lkopf et~al.(1997)Sch{\"o}lkopf, Smola, and
  M{\"u}ller]{Sch_Smo_Mul:1997}
Sch{\"o}lkopf, B., Smola, A., and M{\"u}ller, K.-R. (1997).
\newblock Kernel principal component analysis.
\newblock In \emph{Proceedings of the 7th International Conference on
  Artificial Neural Networks}, pp. 583--588.

\bibitem[Sch\"{o}lkopf et~al.(1998)Sch\"{o}lkopf, Smola, and
  M\"{u}ller]{SchSmoMul98}
Sch\"{o}lkopf, B., Smola, A., and M\"{u}ller, K. (1998).
\newblock Nonlinear component analysis as a kernel eigenvalue problem.
\newblock \emph{Neural Computation}, {\bf 10}\penalty0 (5):\penalty0
  1299--1319.

\bibitem[Serfling(1980)]{Ser:1980}
Serfling, R.~J. (1980).
\newblock \emph{Approximation theorems of mathematical statistics},   Wiley
  Series in Probability and Statistics, {\bf 162}. John Wiley \& Sons.

\bibitem[Sriperumbudur et~al.(2009)Sriperumbudur, Fukumizu, Gretton, Lanckriet,
  and Sch{\"o}lkopf]{Sri_Fuk_Gre:2009}
Sriperumbudur, B., Fukumizu, K., Gretton, A., Lanckriet, G. R.~G., and
  Sch{\"o}lkopf, B. (2009).
\newblock Kernel choice and classifiability for {RKHS} embeddings of
  probability distributions.
\newblock In \emph{Advances in Neural Information Processing Systems 22}, pp.
  1750--1758.

\bibitem[Sutherland et~al.(2017)Sutherland, Tung, Strathmann, De, Ramdas,
  Smola, and Gretton]{Sut_Tun_Str:2017}
Sutherland, D.~J., Tung, H.-Y., Strathmann, H., De, S., Ramdas, A., Smola, A.,
  and Gretton, A. (2017).
\newblock Generative models and model criticism via optimized maximum mean
  discrepancy.
\newblock In \emph{Proceedings of the 5th International Conference on Learning
  Representations}.

\bibitem[Takeuchi et~al.(2006)Takeuchi, Le, Sears, and Smola]{Tak_Le_Sea:2006}
Takeuchi, I., Le, Q.~V., Sears, T.~D., and Smola, A.~J. (2006).
\newblock Nonparametric quantile estimation.
\newblock \emph{Journal of Machine Learning Research}, {\bf 7}\penalty0
  (7):\penalty0 1231--1264.

\bibitem[van~der Vaart(1998)]{Vaa:1998}
van~der Vaart, A. (1998).
\newblock \emph{Asymptotic Statistics},   Cambridge Series in Statistical and
  Probabilistic Mathematics, {\bf 3}. Cambridge University Press, Cambridge.

\bibitem[Vert et~al.(2004)Vert, Tsuda, and Sch{\"o}lkopf]{Ver_Koj_Sch:2004}
Vert, J.-P., Tsuda, K., and Sch{\"o}lkopf, B. (2004).
\newblock A primer on kernel methods.
\newblock In \emph{Kernel Methods in Computational Biology}, pp. 35--70. MIT
  Press.

\bibitem[Zhang et~al.(2017)Zhang, Filippi, Gretton, and
  Sejdinovic]{Zha_Fil_Gre:2017}
Zhang, Q., Filippi, S., Gretton, A., and Sejdinovic, D. (2017).
\newblock Large-scale kernel methods for independence testing.
\newblock \emph{Statistics and Computing}, pp. 1--18.

\end{thebibliography}

\newpage

\renewcommand{\appendixpagename}{Appendix}

\begin{appendices}
\appendixpage

In this Appendix we collect the proofs of all the results stated in the main paper and some further explanations regarding the computation of the bandwidths used in Section~\ref{sec:example}. 
It is organized as follows: 
Section~\ref{sec:proof-lemma-gap} contains the proof of Lemma~\ref{lemma:gap}, 
whereas the proofs of all the other main results can be found in Section~\ref{sec:proof-main-results}, 
and the technical lemmas are collected in Section~\ref{sec:median:additional}. 
The derivation of the cumulative distribution of~$T$ in closed form in exposed in Section~\ref{sec:cdf-derivation}. 
Section~\ref{sec:power-criterion} contains further explanation about and computation of the power criteria introduced in Section~\ref{sec:example}. 
Finally, Section~\ref{sec:proof-abs-computation} is dedicated to the proof of Prop.~\ref{prop:abs-computation}.

\medskip

\section{Proof of Lemma~\ref{lemma:gap}}
\label{sec:proof-lemma-gap}

Set $r \defeq \norm{\mu_X-\mu_Y}/5$. 
According to Chebyshev's inequality, 
\[
\begin{cases}
\proba{\norm{X-\mu_X} > r} &\leq \frac{25\trace{\Sigma_X}}{\norm{\mu_X-\mu_Y}^2} \\
\proba{\norm{Y-\mu_Y} > r} &\leq  \frac{25\trace{\Sigma_Y}}{\norm{\mu_X-\mu_Y}^2} 
\, ,
\end{cases}
\]
and this is also true for any independent copy of $X$ and $Y$.
By the union bound, there exists an event $\Omega_{XX}$ with probability greater than $1-50\trace{\Sigma_X}/\norm{\mu_X-\mu_Y}^2$ such that $\norm{X-\mu_X}\leq r$ and $\norm{X'-\mu_X}\leq r$. 
On this event,
\begin{align}
T_{XX}^{\frac{1}{2}} &= \norm{X-X'} \notag \\
& \tag{definition of $T_{XX}$} \\
& \leq \norm{X-\mu_X} + \norm{\mu_X-X'} \notag \\
& \tag{triangle inequality} \\
T_{XX}^{\frac{1}{2}} & < 2r \, . \notag 
\end{align}
The same reasoning shows that there exists an event $\Omega_{YY}$ with probability at least $1-50\trace{\Sigma_Y}/\norm{\mu_X-\mu_Y}^2$ such that $T_{YY}^{\frac{1}{2}} < 2r$.
There also exists an event $\Omega_{XY}$ with probability greater than $1-25(\trace{\Sigma_X}+\trace{\Sigma_Y})/\norm{\mu_X-\mu_Y}^2$ on which $\norm{X-\mu_X}\leq r$ and $\norm{Y-\mu_Y}\leq r$. 
On this event, 
\begin{align}
T_{XY}^{\frac{1}{2}} &= \norm{X-Y} \notag \\
& \tag{definition of $T_{XY}$} \\
& \geq \norm{\mu_X-\mu_Y} -\norm{X-\mu_X} - \norm{\mu_Y-Y} \notag \\
& \tag{triangle inequality} \\
& \geq \norm{\mu_X-\mu_Y} -2r \notag \\
& \tag{definition of $\Omega_{XY}$} \\
T_{XY}^{\frac{1}{2}} & \geq 3r  \, . \notag \\
& \tag{definition of $r$}
\end{align}
Define $\Omega\defeq \Omega_{XX}\cap \Omega_{YY}\cap\Omega_{XY}$. 
By the union bound, $\Omega$ has probability greater than $1-75\left(\trace{\Sigma_X}+\trace{\Sigma_Y}\right) / \norm{\mu_X-\mu_Y}^2$. 
Moreover, on this event, $\max\left(T_{XX}^{\frac{1}{2}},T_{YY}^{\frac{1}{2}}\right)+r < T_{XY}^{\frac{1}{2}}$, from which we deduce the result.

\qed

\section{Proofs of the main results}
\label{sec:proof-main-results}

\subsection{Proof of Prop.~\ref{prop:median:clt-ustat}}
\label{sec:median:proof-clt-ustat}


The proof consists of three steps.

\paragraph{Step 1: Decomposition of $U_n$.}

We begin by decomposing~$U_n$. 
To this extent, define
\[
A_n = \binom{\alpha n}{2}^{-1} \sum_{1\leq i < j\leq \alpha n} h(X_{n,i},X_{n,j})
\, , \quad
B_n = \binom{(1-\alpha)n}{2}^{-1} \sum_{\alpha n < i < j\leq n} h(X_{n,i},X_{n,j})
\, ,
\]
and
\[
C_n = \frac{1}{\alpha n (1-\alpha)n}\sum_{\substack{1\leq i\leq \alpha n \\ \alpha n < j \leq n}} h(X_{n,i},X_{n,j})
\, .
\]
Note that in~$C_n$ there is no term such that $h(X_{n,i},X_{n,j})$ with $1\leq j\leq \alpha n$ and $\alpha n < i\leq n$, since the sum in~$U_n$ is prescribed to $i<j$.
Simple algebra shows that
\begin{align}
\label{eq:median:decomp-un}
U_n &= \alpha^2 A_n + (1-\alpha)^2 B_n + 2\alpha(1-\alpha)C_n + s_n. 
\end{align}
where $s_n = \frac{\alpha(1-\alpha)}{n-1}A_n - \frac{\alpha(1-\alpha)}{n-1}B_n - \frac{2\alpha(1-\alpha)}{n-1} C_n$.
According to Lemma~\ref{lemma:median:variance-ABC} (see Appendix~\ref{sec:median:additional}), the variance of each of $A_n$, $B_n$ and~$C_n$ is $\bigo{1/n}$, and hence $s_n$ converges to $0$ in probability at speed~$n^{-2}$. 
Therefore we will focus on the terms of Eq.~\eqref{eq:median:decomp-un} other than $s_n$.

\paragraph{Step 2: Decompositions of $A_n$, $B_n$ and $C_n$.}

The next step is to obtain the $H$-decomposition~\citep[Section~1.6]{Lee:1990} of~$A_n$ and~$B_n$.
Let us detail this process for~$A_n$.
We set 
\begin{eqnarray*}
\theta_A &=& \expec{h(X,X')}, \\
h_A(x) &=& \expec{h(x,X')}-\theta_A, \\
g_A(x,y) &=& h(x,y)-h_A(x)-h_A(y)-\theta_A \\
L_A &=& \frac{2}{\alpha n}\sum_{1\leq i\leq \alpha n}h_A(X_{n,i}), \\
R_A &=& \binom{\alpha n}{2}^{-1}\sum_{1\leq i<j\leq \alpha n}g_A(X_{n,i},X_{n,j}).
\end{eqnarray*}
Then it is possible to write $A_n$ as~\citep[Th.~1]{Lee:1990} 
\begin{equation*}
A_n=\theta_A +L_A+R_A
\, .
\end{equation*}
%
%
A totally analogous statement holds for~$B_n$: Define
\begin{eqnarray*}
\theta_B &=& \expec{h(Y,Y')},\\ 
h_B(y) &=& \expec{h(y,Y')}-\theta_B,\\
g_B(x,y) &=& h(x,y)-h_B(x)-h_B(y)-\theta_B, \\
L_B &=& \frac{2}{(1-\alpha) n}\sum_{\alpha n < i \leq n}h_B(X_{n,i}), \\
R_B &=& \binom{(1-\alpha) n}{2}^{-1}\sum_{\alpha n < i < j \leq n}g_B(X_{n,i},X_{n,j}).
\end{eqnarray*}
Then $B_n$ can be written as 
\[
B_n=\theta_B +L_B+R_B
\, .
\]

We decompose~$C_n$ in the same fashion, the only difference being the appearance of a second term in the linear part.
Namely, set 
\begin{eqnarray*}
\theta_C &=& \expec{h(X,Y)}, \\
h_{C,1}(x) &=& \expec{h(x,Y)}, \\
 h_{C,2}(y) &=& \expec{h(X,y)}, \\
 g_C(x,y) &=& h(x,y) - h_{C,1}(x) - h_{C,2}(y) -\theta_C, \\
 L_C &=& \frac{1}{\alpha n}\sum_{i=1}^{\alpha n}h_{C,1}(X_{n,i})+\frac{1}{(1-\alpha)n}\sum_{i=\alpha n +1}^n h_{C,2}(X_{n,i}), \\
 R_C &=& \frac{1}{\alpha n(1-\alpha)n} \sum_{\substack{1\leq i\leq \alpha n \\ \alpha n < j\leq n}} g_C(X_{n,i},X_{n,j})
 \, .
\end{eqnarray*}
Then we have
\[
C_n=\theta_C+L_C+R_C
\, .
\]
%
%
Since $\theta$ defined in Eq.~\eqref{eq:def-theta} can be written as 
\[
\theta = \alpha^2\theta_A + (1-\alpha)^2\theta_B + 2\alpha(1-\alpha)\theta_C
\, ,
\]
we have by Eq.~\eqref{eq:median:decomp-un} and the above expressions for $A_n$, $B_n$ and $C_n$
\begin{eqnarray}
&& \sqrt{n}(U_n-\theta) \nonumber \\
&=& \sqrt{n}\left[ \alpha^2 (L_A + R_A) + (1-\alpha)^2 (L_B + R_B) + 2\alpha(1-\alpha) (L_C + R_C) + s_n \right] \nonumber \\
&=& \sqrt{n}\left[\alpha^2 L_A+(1-\alpha)^2 L_B  +2\alpha(1-\alpha)L_C\right] + \sqrt{n} (r_n + s_n) \label{eq:median:decomp-clt}
\, ,
\end{eqnarray}
where $r_n = \alpha^2 R_A + (1-\alpha)^2 R_B + 2\alpha (1-\alpha) R_C$.
According to Lemma~\ref{lemma:median:variance-RABC} in Appendix \ref{sec:median:additional}, the variance of each of $R_A$, $R_B$ and $R_C$ is of order $n^{-2}$.
As mentioned earlier $s_n$ is also of order $n^{-2}$.
Therefore we have $\sqrt{n}(r_n+s_n) \cvproba 0$ as $n \to \infty$.
This shows that we only need to focus on the first term of Eq.~\eqref{eq:median:decomp-clt}.

\paragraph{Step 3: Regrouping of the terms in Eq.~\eqref{eq:median:decomp-clt}.}

We now regroup the first term in Eq.~\eqref{eq:median:decomp-clt} that belong to the same segment (\emph{i.e.}, $1\leq i\leq \alpha n$ or  $\alpha n < i \leq n$).
That is, a simple calculation yields that
\begin{eqnarray}
&& \sqrt{n}\left[\alpha^2 L_A+(1-\alpha)^2 L_B +2\alpha(1-\alpha)L_C\right] \label{eq:regroup} \\ 
&=& \sqrt{n} \left[\frac{2}{n}\sum_{1\leq i\leq \alpha n} \left( \alpha h_A(X_{n,i}) + (1-\alpha) h_{C,1}(X_{n,i}) \right)  \right. \nonumber \\
&& \left. +\frac{2}{n}\sum_{\alpha n < i \leq n} \left( \alpha h_{C,2}(X_{n,i})  (1-\alpha) h_B(X_{n,i}) \right) \right]. \nonumber
\end{eqnarray}
For any $ 1\leq i\leq \alpha n$, define
\[
Z_{n,i}^{(1)} = \alpha h_A(X_{n,i}) + (1-\alpha)h_{C,1}(X_{n,i})
\, .
\]
Since $h\in L^2(P)\cap L^2(Q) \times L^2(P)\cap L^2(Q)$ and $X_{n,i} \sim X$,
\[
\sigma_1^2 := \var{Z_{n,i}^{(1)}}=\var{\alpha h_A(X) + (1-\alpha)h_{C,1}(X)}
\]
is finite and does not depend on~$i$.
Note also the $(Z_{n,i}^{(1)})_{i=1}^{\alpha n}$ are independent.
Therefore the Lindeberg's condition is satisfied, and hence we can apply the central limit theorem for triangular arrays of independent random variables~\citet[Th.~27.2]{Bil:2012} to obtain
\[
\frac{1}{\sqrt{\alpha n}}\sum_{i=1}^{\alpha n} Z_{n,i}^{(1)} \cvlaw \gaussian{0}{\sigma_1^2}
\, \quad (n \to \infty).
\]
In a similar fashion, for any $ \alpha n < i \leq n$, let 
\[
Z_{n,i}^{(2)}\defeq \alpha h_{C,2}(X_{n,i})+(1-\alpha)h_B(X_{n,i})
\, ,
\] 
and 
\[
\sigma_2^2\defeq \var{Z_{n,i}^{(2)}} = \var{ \alpha h_{C,2}(Y)+(1-\alpha)h_B(Y) }
\, .
\]
Then we have
\[
\frac{1}{\sqrt{(1-\alpha)n}} \sum_{i=\alpha n + 1}^n  Z_{n,i}^{(2)}\cvlaw \gaussian{0}{\sigma_2^2}
\, \quad (n \to \infty)
\, .
\]
The two previous sums are independent, and thus by L\'evy's theorem and Eq.~\eqref{eq:regroup} we have
\begin{eqnarray*}
&& \sqrt{n}\left[\alpha^2 L_A+(1-\alpha)^2 L_B +2\alpha(1-\alpha)L_C\right] \\
&=& \sqrt{n} \left[\frac{2}{n}\sum_{1\leq i\leq \alpha n} Z_{n,i}^{(1)} +   \frac{2}{n}\sum_{\alpha n < i \leq n} Z_{n,i}^{(2)} \right] \\
&=& \frac{2 \sqrt{\alpha} }{\sqrt{\alpha n} }\sum_{1\leq i\leq \alpha n} Z_{n,i}^{(1)} +   \frac{2 \sqrt{1-\alpha} }{ \sqrt{(1-\alpha)n} }\sum_{\alpha n < i \leq n} Z_{n,i}^{(2)}  
\cvlaw \gaussian{0}{\sigma^2} \, \quad (n \to \infty) ,
\end{eqnarray*}
where $\sigma^2 = \sigma_1^2 + \sigma_2^2$, which can be shown to be equal to Eq.~\eqref{eq:median:def-variance} by a straightforward computation.
Since the remainder term in Eq.~\eqref{eq:median:decomp-clt} converges in probability to~$0$, we can conclude \emph{via\/} Slutsky's Lemma \citep[Th.~2.7]{Vaa:1998}.
\qed

\subsection{Proof of Prop.~\ref{prop:median:clt-median-heuristic}} 
\label{sec:proof-clt-median-heuristic}

Set $t\in\Reals$.
The general idea of the proof is to rewrite statements about the event $\bigl\{\sqrt{n} \left(\Sqmedh_n-\Medvalue\right)\leq t\bigr\}$ as statements about a sum of $U$-statistics.
We will then control these $U$-statistics with Prop.~\ref{prop:median:clt-ustat} for  conveniently chosen~$h$, and conclude with Slutsky's Lemma.
Throughout this proof, we only suppose that $p\in (0,1)$ to emphasize that Prop.~\ref{prop:median:clt-median-heuristic} can be extended to \emph{any\/} quantile, not only the median.
Therefore in this proof we define $m$ by 
\[
m := \hat{F}(p)
\, .
\]
%
Note that the median case corresponds to $p = 1/2$.

We use the property of the generalized inverse to obtain
\[
\bigl\{\sqrt{n}\left(\Sqmedh_n-\Medvalue\right)\leq t\bigr\} = \biggl\{p\leq \Ecdf_n\left(\Medvalue + \frac{t}{\sqrt{n}}\right)\biggr\}
\, ,
\]
and rewrite this event as
\begin{equation}
\label{eq:median:rhs}
\left\{\sqrt{n}\left(\Ecdf_n\left(\Medvalue + \frac{t}{\sqrt{n}}\right) - \cdf{\Medvalue+\frac{t}{\sqrt{n}}}\right)\geq \sqrt{n}\left(p-\cdf{\Medvalue +\frac{t}{\sqrt{n}}}\right)\right\}
\, .	
\end{equation}
Since~$\Cdf$ is differentiable in~$\Medvalue$, the right-hand side of Eq.~\eqref{eq:median:rhs} converges towards $-t\Cdf'(\Medvalue)$ by Taylor expansion.
From Prop.~\ref{prop:median:clt-ustat}, it is also true that 
\[
\sqrt{n}\left(\Ecdf_n(\Medvalue) - \cdf{\Medvalue}\right) \cvlaw \gaussian{0}{\sigma^2}
\, .
\]
Therefore, if we manage to prove that
\begin{equation}
\label{eq:median:key}
\sqrt{n}\left[ \left(\Ecdf_n\left(\Medvalue + \frac{t}{\sqrt{n}}\right) - \Ecdf_n(\Medvalue)\right) -\left(\cdf{\Medvalue+\frac{t}{\sqrt{n}}}-\cdf{\Medvalue}\right)\right] \cvproba 0
\, ,
\end{equation}
Then Eq.~\eqref{eq:median:clt-median-heuristic} will follow by Slutsky's Lemma.

Define $h(x,y) = \indic{\Medvalue < \norm{x-y}^2 \leq \Medvalue +\frac{t}{\sqrt{n}}}$.
Then, with the notations used in the proof of Prop.~\ref{prop:median:clt-ustat}, Eq.~\eqref{eq:median:key} reads
\[
\sqrt{n}(U_n-\theta) \cvproba 0
\, .
\]
We dispose of the remainder terms as in the proof of Prop.~\ref{prop:median:clt-ustat}, thus we are left to show that
\begin{equation}
\label{eq:median:key-translation}
\sqrt{n}\left[\alpha^2 (A_n-\theta_A) +(1-\alpha)^2(B_n-\theta_B) +2\alpha(1-\alpha)(C_n-\theta_C)\right] \cvproba 0
\, .
\end{equation}
Let us focus on the first term of the previous display, which can be written
\[
\alpha^2\sqrt{n}\left(A_n-\theta_A\right) = \alpha^2\sqrt{n}\left(L_A+R_A\right)
\, .
\]
Once again, we use Lemma~\ref{lemma:median:variance-RABC} to get rid of~$R_A$.
The linear term is slightly more tedious to analyze.
Recall that $\expec{h_A(X)}=0$.
Thanks to Jensen's inequality,
\begin{align*}
\var{h_A(X)} &= \expecunder{\left(\expecunder{\indic{\Medvalue <\norm{X-X'}^2\leq \Medvalue +\frac{t}{\sqrt{n}}}}{X'}^2\right)}{X} \\
&\leq \proba{\Medvalue < \norm{X-X'}^2\leq \Medvalue + \frac{t}{\sqrt{n}}}
\, .
\end{align*}
We recognize
\[
\cdf{\Medvalue+\frac{t}{\sqrt{n}}}-\cdf{\Medvalue}
\, ,
\]
which goes to~$0$ when $n\to\infty$, since we assumed~$\Cdf$ to have a derivative in~$\Medvalue$.
Furthermore, by independence of the~$X_{n,i}$,
\[
\var{\sqrt{n}L_A} = 4\var{h_A(X)} \underset{n\to\infty}{\longrightarrow} 0
\, .
\]
A similar reasoning applies to the other terms in Eq.~\eqref{eq:median:key-translation}, and the proof is concluded.

\section{Technical lemmas} 
\label{sec:median:additional}

In this section, we state and prove the technical results that are needed in the proofs of the main results.
Recall that we denote by $\card{A}$ the cardinality of any finite set $A$.

\begin{lemma}[\textbf{Controlling the variance of $A_n$, $B_n$, and $C_n$}]
	\label{lemma:median:variance-ABC}
	Let $A_n$, $B_n$, and $C_n$ be defined as in the proof of Prop.~\ref{prop:median:clt-ustat}. 
	Then $\var{A_n}$, $\var{B_n}$, and $\var{C_n}$ are $\bigo{n^{-1}}$.
\end{lemma}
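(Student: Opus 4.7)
The plan is to treat the three terms separately, since $A_n$ and $B_n$ are one-sample $U$-statistics of degree two while $C_n$ is a two-sample $U$-statistic; in all three cases the standard variance bookkeeping for $U$-statistics yields the $O(n^{-1})$ rate.

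First I would handle $A_n$. By Assumption~\ref{assump:median:distribution}, the variables $X_{n,1},\ldots,X_{n,\alpha n}$ are i.i.d.\ copies of $X\sim P$, so $A_n$ is a genuine one-sample $U$-statistic of order $2$ with symmetric kernel $h$ on these $\alpha n$ samples. The classical formula (see, \emph{e.g.}, \citet[Section~1.3]{Lee:1990}) gives
\[
\var{A_n}=\binom{\alpha n}{2}^{-1}\left[2(\alpha n-2)\zeta_1+\zeta_2\right],
\]
with $\zeta_1=\var{\expecunder{h(X,X')}{X'}}$ and $\zeta_2=\var{h(X,X')}$. The hypothesis $h\in L^2(P)\cap L^2(Q)\times L^2(P)\cap L^2(Q)$ ensures $\zeta_1,\zeta_2<+\infty$, so $\var{A_n}=\bigo{n^{-1}}$. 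The same argument applied to the i.i.d.\ sample $X_{n,\alpha n+1},\ldots,X_{n,n}\sim Q$ gives $\var{B_n}=\bigo{n^{-1}}$.

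The term $C_n$ is a two-sample $U$-statistic, and I would bound its variance by direct expansion. Setting $M=\alpha n$ and $N=(1-\alpha)n$, write
\[
\var{C_n}=\frac{1}{M^2N^2}\sum_{\substack{1\leq i,i'\leq M\\ M<j,j'\leq n}}\mathrm{Cov}\!\left(h(X_{n,i},X_{n,j}),h(X_{n,i'},X_{n,j'})\right).
\]
By the independence structure of the triangular array, each covariance vanishes unless $i=i'$ or $j=j'$. I would then bound the surviving covariances uniformly using Cauchy--Schwarz and the $L^2$ assumption on $h$, and count the number of contributing quadruples: there are $O(MN^2)$ with $i=i'$, $O(M^2N)$ with $j=j'$, and $O(MN)$ with both equalities. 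Dividing by $M^2N^2$ yields $\var{C_n}=\bigo{M^{-1}+N^{-1}}=\bigo{n^{-1}}$.

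No step here is a serious obstacle; the only care required is to keep track of which pairs of indices produce dependent summands in the expansion of $\var{C_n}$, and to verify that the $L^2$ integrability hypothesis on $h$ is sufficient to control the relevant $\zeta_c$ quantities for all three terms uniformly in $n$.
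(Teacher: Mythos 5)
Your proof is correct and follows essentially the same route as the paper's: both rest on the standard $U$-statistic variance bookkeeping, namely that only pairs of index pairs sharing at least one element contribute to $\var{A_n}$, $\var{B_n}$, $\var{C_n}$, and there are $\bigo{n^3}$ such pairs against a normalization of order $n^4$. If anything you are more explicit than the paper about the two-sample term $C_n$ (the paper merely asserts the argument ``transfers readily''), while your appeal to the closed-form one-sample variance formula for $A_n$ and $B_n$ implicitly uses that $h$ is symmetric --- true for the indicator kernels actually needed, but worth stating since Prop.~\ref{prop:median:clt-ustat} is phrased for general $h$.
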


The proof is standard in $U$-statistics~\citep{Lee:1990}, we provide a version thereof for completeness' sake.
	
\begin{proof}
We set $m\defeq \alpha n$ in this proof.
Recall that 
\[
A_n=\binom{m}{2}^{-1} \sum_{1\leq i < j\leq m} h(X_{n,i},X_{n,j})
\, ,
\]
Define $h_{i,j}=h(X_{n,i},X_{n,j})$, thus $\Expec h_{i,j}=\theta_A$ and $\Expec A_n = \theta_A$.
Let us turn to the computation of $\expec{A_n^2}$, that is, 
\[
\expec{A_n^2} = \binom{m}{2}^{-2}\sum_{a<b}\sum_{c<d} \expec{h_{a,b}h_{c,d}}
\, ,
\]
where $a,b,c$ and $d$ range from~$1$ to $m$.
In this last display, there are three possibilities for the indices in the summation that we detail below.
\begin{itemize}
	\item
	The indices $a,b,c$ and $d$ are all distinct. 
	There are $\binom{m}{4}\binom{4}{2} = \frac{m^4}{4}+\bigo{m^3}$ ways to choose such indices, that is, $\binom{m}{4}$ ways to choose the location of the~$4$ indices among the~$m$ possible locations, then $\binom{4}{2}$ choices for, say, $a<b$, and only one possibility left for $c<d$.
	
	\item
	One of the indices is common, that is, $\card{\{a,b\}\cap\{c,d\}}=1$. 
	There are $6\binom{m}{3}=\bigo{m^3}$ ways to do so.
	
	\item 
	Both indices are equal, that is, $a=c$ and $b=d$. 
	There are $\binom{m}{2}=\bigo{m^2}$ ways to do so.
\end{itemize}
Note that when $a,b,c$ and $d$ are all distinct, $\expec{h_{a,b}h_{c,d}}=\expec{h_{a,b}}\expec{h_{c,d}}$ by independence of the~$X_{n,i}$s.
Thus 
\begin{align*}
\expec{A_n^2} &= \binom{m}{2}^{-2}\sum_{\substack{a<b \\ c<d \\ \neq}}\expec{h_{a,b}}\expec{h_{c,d}} +\bigo{m^{-1}}
\, ,
\end{align*}
where we sum on \emph{distinct} indices.
On the other hand,
\[
\expec{A_n}^2 = \binom{m}{2}^{-2} \sum_{\substack{a < b \\ c < d}} \Expec h_{a,b} \Expec h_{c,d}
\, .
\]
By the same combinatorial argument, the terms corresponding to intersecting sets of indices are at most $\bigo{n^3}$ and we have
\[ 
\expec{A_n}^2 = \binom{m}{2}^{-2} \sum_{\substack{a < b \\ c < d \\ \neq}} \Expec h_{a,b} \Expec h_{c,d} + \bigo{m^{-1}}
\, .
\]
Since $m=\alpha n$, $\bigo{m^{-1}}=\bigo{n^{-1}}$ and we can conclude for $A_n$:
\[
\var{A_n} = \expec{A_n^2} - \expec{A_n}^2 =\bigo{n^{-1}}
\, .
\]
The same proof transfers readily for $B_n$ and $C_n$.
\end{proof}

\begin{lemma}[\textbf{Controlling the variance of $R_A$, $R_B$, and $R_C$}]
	\label{lemma:median:variance-RABC}
	Let $R_A$, $R_B$, and $R_C$ be as in the proof of Prop.~\ref{prop:median:clt-median-heuristic}. 
	Then $\var{R_A}$, $\var{R_B}$, and $\var{R_C}$ are of order $\bigo{n^{-2}}$.
\end{lemma}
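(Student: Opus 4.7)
The plan is to exploit the defining property of the Hoeffding remainders, namely that the kernels $g_A$, $g_B$, $g_C$ are \emph{completely degenerate}: by construction, $\Expec g_A(X,y) = \Expec g_A(x,X') = 0$ for all $x,y$, and similarly $\Expec g_B(Y,y) = \Expec g_B(x,Y') = 0$, while $g_C$ satisfies $\Expec g_C(X,y) = 0$ for all $y$ and $\Expec g_C(x,Y) = 0$ for all $x$ (with $X\sim P$, $Y\sim Q$). These identities are exactly what the Hoeffding projection is designed to produce, and they are the only tool needed.

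First I would treat $R_A$. Writing $m = \alpha n$, one expands
\[
\var{R_A} \;=\; \binom{m}{2}^{-2} \sum_{\substack{1\leq i<j\leq m \\ 1\leq k<l\leq m}} \expec{g_A(X_{n,i},X_{n,j})\, g_A(X_{n,k},X_{n,l})},
\]
and splits the sum according to the size of $\{i,j\}\cap\{k,l\}$, exactly as in the combinatorial argument of Lemma~\ref{lemma:median:variance-ABC}. When the index sets are disjoint, independence of the $X_{n,\cdot}$ and $\Expec g_A(X,X')=0$ make the summand vanish. When the index sets share exactly one index, conditioning on the common variable reduces the expectation to $\Expec[g_A(X,y)]$ evaluated at an independent copy, which vanishes by complete degeneracy. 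Only the $\binom{m}{2}$ diagonal terms $\{i,j\}=\{k,l\}$ survive, each bounded by $\Expec[g_A(X,X')^2]$, which is finite thanks to the $L^2$ assumption on $h$. This gives $\var{R_A} = \bigo{\binom{m}{2}^{-1}} = \bigo{n^{-2}}$. The argument for $R_B$ is identical with $P$ replaced by $Q$ and indices ranging over the second segment.

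The case of $R_C$ follows the same template but requires slightly more bookkeeping because the summation is over pairs $(i,j)$ with $1\leq i\leq \alpha n<j\leq n$. Writing
\[
\var{R_C} \;=\; \frac{1}{(\alpha n)^2(1-\alpha)^2 n^2} \sum_{\substack{1\leq i,i'\leq \alpha n \\ \alpha n<j,j'\leq n}} \expec{g_C(X_{n,i},X_{n,j})\, g_C(X_{n,i'},X_{n,j'})},
\]
one splits the quadruples $(i,j,i',j')$ into four cases according to whether $i=i'$ and whether $j=j'$. The cases $i\neq i'$, $j\neq j'$ vanish by independence together with $\Expec g_C(X,Y)=0$. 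The cases $i=i'$, $j\neq j'$ and $i\neq i'$, $j=j'$ vanish after conditioning on the shared variable, using $\Expec g_C(x,Y)=0$ and $\Expec g_C(X,y)=0$ respectively. Only the diagonal $(i=i',\,j=j')$ survives, contributing $\alpha n\cdot (1-\alpha)n$ terms each bounded by $\Expec[g_C(X,Y)^2]<\infty$, hence $\var{R_C}=\bigo{n^{-2}}$.

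The main technical obstacle, if any, is the bookkeeping for $R_C$, since the two index ranges are distinct and one has to verify that both projection identities ($\Expec g_C(x,Y)=0$ and $\Expec g_C(X,y)=0$) are available to kill each of the two ``shared-index'' cases; this is where the \emph{asymmetry} of the Hoeffding decomposition of $C_n$ (two linear terms $h_{C,1}$ and $h_{C,2}$ rather than one) is actually essential. No central limit theorem or refined combinatorics is needed: the bound follows purely from degeneracy plus counting.
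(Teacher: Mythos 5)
Your proof is correct and follows essentially the same route as the paper's: expand the second moment of each remainder, use the complete degeneracy of the Hoeffding projections $g_A$, $g_B$, $g_C$ to annihilate all terms whose index pairs share fewer than two elements, and count the $\bigo{n^2}$ surviving diagonal terms against the $\bigo{n^{-4}}$ normalization. The only difference is that you spell out the two-sample bookkeeping for $R_C$ explicitly, which the paper leaves implicit.
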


\begin{proof}
Recall that
\[
R_A = \binom{m}{2}^{-1}\sum_{1\leq i<j\leq m} g_{i,j}
\, ,
\]
with $g_{i,j} = g_A(X_{n,i},X_{n,j})$.
By definition of $g_A$, it holds that $\Expec g_{i,j}=0$ for any $i\neq j$, thus $\Expec R_A=0$.
Hence to control the variance of~$R_A$, we just need to compute $\expec{R_A^2}$.
As in the proof of Lemma~\ref{lemma:median:variance-ABC}, we have
\[
\expec{R_A^2} = \binom{m}{2}^{-2} \sum_{a<b}\sum_{c<d}\expec{g_{a,b}g_{c,d}}
\, .
\]
Note that $\expec{g_{a,b}g_{c,d}}=0$ whenever $a,b,c$ and $d$ are all distinct.
But a straightforward computation also shows that $\expec{g_{a,b}g_{a,c}}=0$ for any distinct $a,b,c$.
Thus the previous display reduces to
\[
\expec{R_A^2} = \binom{m}{2}^{-2} \sum_{\substack{a<b\\ c<d \\ \star}}\expec{g_{a,b}g_{c,d}}
\, ,
\]
where $\star$ denotes that we sum on indices such that $\card{\{a,b\}\cap\{c,d\}}\geq 2$.
As we have seen in the proof of Lemma~\ref{lemma:median:variance-ABC}, there are only $\bigo{m^2}$ such possibilities, and we can conclude.
\end{proof}

\section{Derivation of the cumulative distribution function of T}
\label{sec:cdf-derivation}

In this section, we derive the population cumulative distribution of the pairwise distances in closed-form in both scenarios considered in Section~\ref{sec:example}. 
We first introduce some additional notations. 
We let~$\gamma$ be the \emph{lower} incomplete gamma function:
\[
\gamma(a,x) = \int_0^x t^{a-1}\exps{-t}\diff t
\, ,
\]
and $Q_M$ the Marcum $Q$-function \citep{Mar:1950}, defined by
\[
Q_M(a,b) = \int_b^{+\infty} x \left(\frac{x}{a}\right)^{M-1} \exp{-\frac{x^2+a^2}{2}}I_{M-1}(ax) \diff x
\, ,
\]
where $I_{M-1}$ is the modified Bessel function of order $M-1$ \citep{Abr_Ste:1964}.

\paragraph{Change in the mean.}

In the {\bf Mean} scenario, $T_{XX}$ and $T_{YY}$ have the law of $2\chi_1^2$, whereas $T_{XY}$ has the distribution of a non-central chi-squared random variable $\chi_1^2(\mu/2)$. 
It is well-known that the cumulative distribution function of a chi-squared random variable with one degree of freedom is given by $\frac{\gamma\left(\frac{1}{2},\frac{\cdot}{2}\right)}{\Gamma\left(\frac{1}{2}\right)}$, whereas that of a non-central chi-squared with non-centrality parameter $\lambda$ is given by $1-Q_{1/2}\left(\sqrt{\lambda},\sqrt{\cdot}\right)$. 
Thus, according to the definition of~$T$ as a mixture, we have 
\[
F_T(t) = \left[\alpha^2+(1-\alpha)^2\right] \frac{\gamma\left(\frac{1}{2},\frac{t}{4}\right)}{\Gamma\left(\frac{1}{2}\right)} + 2\alpha(1-\alpha) \left(1-Q_{\frac{1}{2}}\left(\frac{\mu}{\sqrt{2}},\sqrt{\frac{t}{2}}\right)\right)
\, .
\]
We plot $F_T$ for a few values of $\mu$ in Figure~\ref{fig:cdf-example}.
Finding the value of the population median of~$T$ is equivalent to solving $F_T(t)=1/2$ for $t$. 
Since $F_T$ is an increasing function, this can be achieved by dichotomy, or, even faster, \emph{via} Newton method.
A straightforward computation yields the derivative of~$F_T$, 
\[
f_T(t) = \frac{\left[\alpha^2+(1-\alpha)^2\right]\exps{-t/4} + 2\alpha(1-\alpha)\exps{-(\mu^2+t)/4}\cosh \left(\frac{\mu\sqrt{t}}{2}\right)}{2\sqrt{\pi t}}
\, .
\]
\begin{figure}[t!]
\centering 
\includegraphics[width=0.49\linewidth]{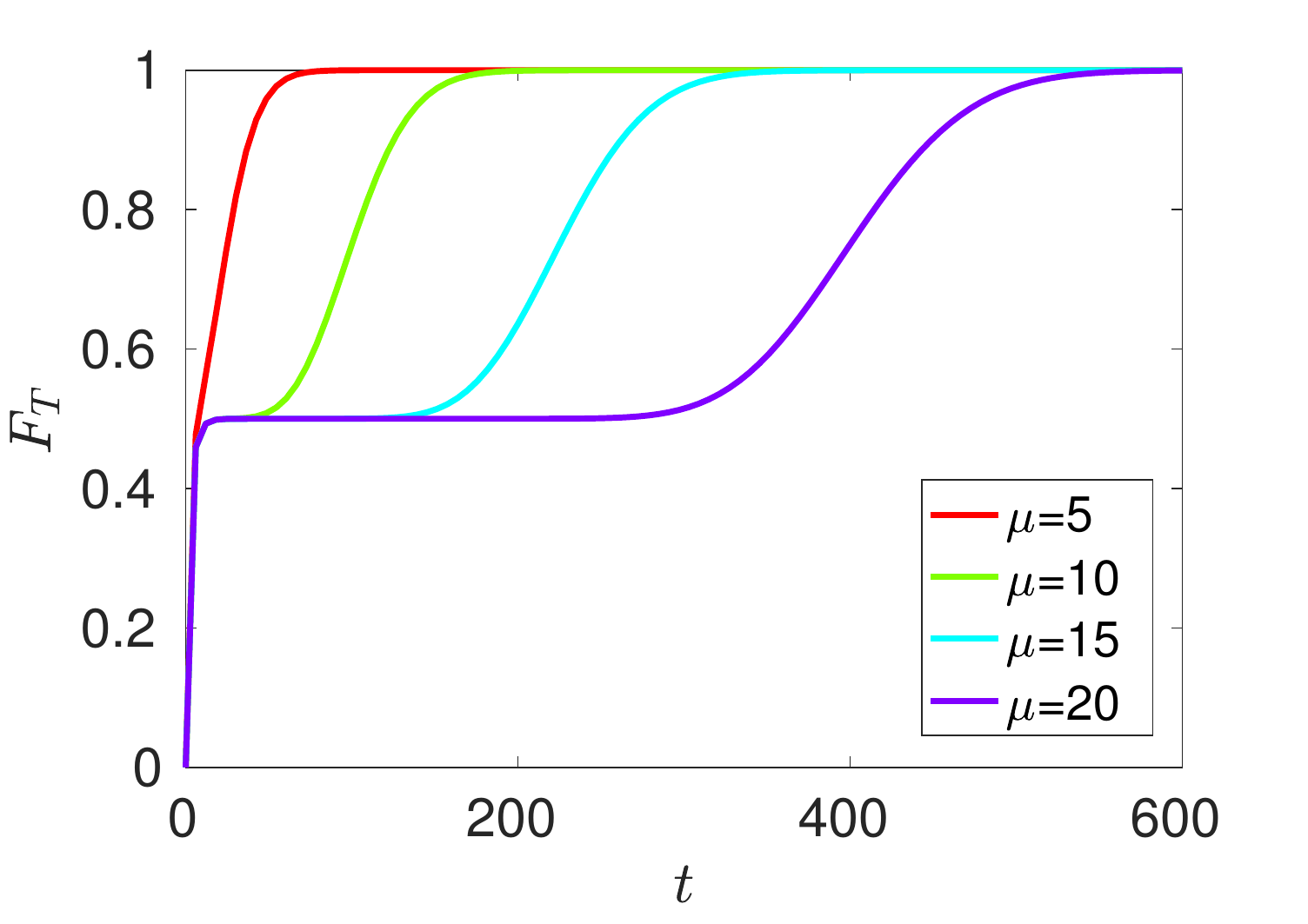}
\hspace{-0.5cm}
\includegraphics[width=0.49\linewidth]{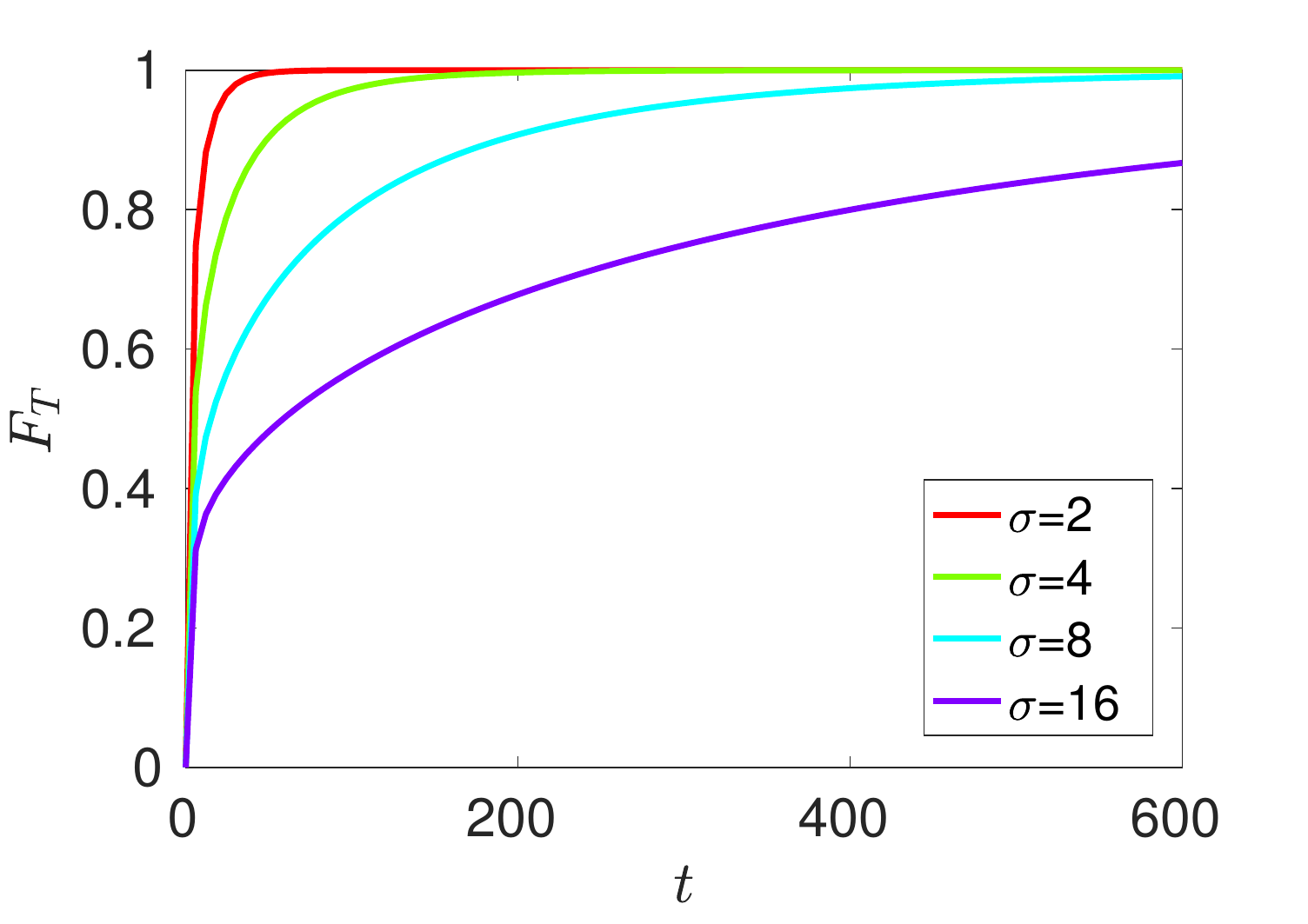}
\caption{\label{fig:cdf-example}In this figure, we plot the cumulative distribution function of~$T$. 
    \emph{Left panel:} change in the mean scenario. 
    \emph{Right panel:} change in the variance.
    Note that $F_T$ is becoming very ``flat'' when $\mu$ increases in the left panel, leading to numerical problems when solving $F_T(t)=1/2$. 
}
\end{figure}

\paragraph{Change in the variance.}

In the {\bf Var} scenario, $T_{XX}$ has the law of $2\chi_1^2$, $T_{YY}$ has the law of $2\sigma^2\chi_1^2$, and $T_{XY}$ has the law of $(1+\sigma^2)\chi_1^2$. 
By the same reasoning, we obtain the cumulative distribution function of~$T$, 
\[
F_T(t) = \frac{\alpha^2\gamma\left(\frac{1}{2},\frac{t}{4}\right)+(1-\alpha)^2\gamma\left(\frac{1}{2},\frac{t}{4\sigma^2}\right) + 2\alpha(1-\alpha)\gamma\left(\frac{1}{2},\frac{t}{2(\sigma^2+1)}\right)}{\Gamma\left(\frac{1}{2}\right)}
\, .
\]
We plot $F_T$ for a few values of $\sigma$ in Figure~\ref{fig:cdf-example}.
A straightforward computation yields
\[
f_T(t) = \frac{\alpha^2\exp{\frac{-t}{4}}}{2\sqrt{\pi t}} + \frac{(1-\alpha)^2\exp{\frac{-t}{4\sigma^2}}}{2\sigma\sqrt{\pi t}} + \frac{2\alpha(1-\alpha)\exp{\frac{-t}{2(\sigma^2+1)}}}{\sqrt{2(\sigma^2+1)\pi t}}
\, .
\]

\section{Power criterion for kernel two-sample test}
\label{sec:power-criterion}

In this section, we set $z=(x,y)$ with $x\sim X$ and $y\sim Y$, where it should be clear that~$X$ and~$Y$ depend on the scenario that we are investigating. 
As before,~$x'$ and~$y'$ denote independent copies of~$x$ and~$y$, thus~$z'$ is an independent copy of~$z$. 
Note that the proportion of~$X$ and~$Y$ is fixed to $\alpha=1/2$. 
Recall that we focus on $k_{\nu}(x,y)=\exp{-(x-y)^2/(2\nu^2)}$, the Gaussian kernel with bandwidth $\nu$, that we denote by~$k$ for concision. 
We also set
\[
h(z,z') \defeq \kernel{x}{x'} + \kernel{y}{y'} - \kernel{x}{y'} - \kernel{x'}{y}
\, .
\]

According to \citet{Gre_Sej_Str:2012}, the asymptotic probability of a type II error for the linear-time test statistic $\linMMD$ at level $a$ is given by
\[
\Phi\left(\Phi^{-1}\left(1-a\right) -\frac{\MMD^2\sqrt{n}}{\sigma_{\ell}\sqrt{2}}\right)
\, ,
\]
where $\MMD^2$ is the maximum mean discrepancy between $P$ and $Q$ and $\sigma_{\ell}^2$ is an asymptotic variance term given by \citep[Section~6]{Gre_Bor_Ras:2012}
\begin{equation}
\label{eq:lin-variance}
\sigma_{\ell}^2 \defeq 2\left(\expecunder{h(z,z')^2}{z,z'} - \left(\expecunder{h(z,z')}{z,z'}\right)^2\right)
\, .
\end{equation}
Thus a natural way to select the kernel---in our case the bandwidth---is to maximize the \emph{power ratio criterion}
\[
\ratioLin = \ratioLin(P,Q,\Kernel) \defeq \frac{\MMD^2}{\sigma_{\ell}}
\, ,
\]
keeping in mind that both the maximum mean discrepancy and $\sigma_{\ell}$ are depending on $P$, $Q$, and $\Kernel$.

For the quadratic-time test statistic $\quadMMD$, the picture is not so clear since the null distribution is more complicated and the type II error depends on a quantile of this distribution, namely
\[
\frac{c_{a}}{\sqrt{n}} - \frac{\MMD^2 \sqrt{n}}{\sigma_u\sqrt{2}}
\, ,
\]
where $\sigma_u^2$ is an asymptotic variance term given by \citep[Section~6]{Gre_Bor_Ras:2012}
\begin{equation}
\label{eq:quad-variance}
\sigma_u^2 \defeq 4\left(\expecunder{\left(\expecunder{h(z,z')}{z'}\right)^2}{z} - \left(\expecunder{h(z,z')}{z,z'}\right)^2\right)
\, .
\end{equation}
Nevertheless, we choose the bandwidth that maximizes the following power ratio criterion
\[
\ratioQuad = \ratioQuad(P,Q,\Kernel) \defeq \frac{\MMD^2}{\sigma_u}
\, .
\]
We now proceed to compute the ratio $\ratioLin$ and $\ratioQuad$ in both the change of mean and the change of variance scenarios.
Note that, though the MMD expressions are well-known, the closed-form expressions we obtain for the variance terms are new up to the best of our knowledge.

\paragraph{Change in the mean.}

In this case, the MMD is given by
\[
\expecunder{h(z,z')}{z,z'} = \frac{2\nu}{\sqrt{\nu^2+2}}\left(1-\exp{\frac{-\mu^2}{2(\nu^2+2)}}\right)
\, .
\]
See, for instance, \citet{Red_Ram_Poc:2015}. 
We now turn to the variance terms.
The second term in Eq.~\eqref{eq:lin-variance} and Eq.~\eqref{eq:quad-variance} is the MMD, given by the previous display. 
The remaining computation is, in both the linear-time and the quadratic-time case, a lengthy derivation that uses intensively the following lemma:

\begin{lemma}[\textbf{Gaussian integral computation}]
\label{lemma:master}
Let $a,b,c,d\in \Reals$ with $b,d >0$.
Then
\[
\frac{1}{\sqrt{2\pi}} \int \exp{\frac{-(x-a)^2}{b} + \frac{-(x-c)^2}{d}} \diff x = \sqrt{\frac{bd}{2(b+d)}} \exp{\frac{-(a-c)^2}{b+d}}
\, .
\]
\end{lemma}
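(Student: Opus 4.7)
The plan is to evaluate this Gaussian integral by the classical method of completing the square in the exponent. First I would combine the two quadratic terms over a common denominator,
$$-\frac{(x-a)^2}{b} - \frac{(x-c)^2}{d} = -\frac{d(x-a)^2 + b(x-c)^2}{bd},$$
and expand the numerator into a single quadratic in $x$, namely $(b+d)x^2 - 2(ad+bc)x + (da^2 + bc^2)$. Completing the square, I would rewrite this as
$$(b+d)\left(x - \frac{ad+bc}{b+d}\right)^2 + \frac{bd(a-c)^2}{b+d}.$$
The residual constant comes from the algebraic identity $(da^2+bc^2)(b+d) - (ad+bc)^2 = bd(a-c)^2$, which is the only step requiring any real care: one needs the cross-terms $-2abcd$ from the square to cancel against $bda^2 + bdc^2$ in just the right way to reproduce the symmetric form $bd(a-c)^2$.

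Substituting back, the integrand factors as a Gaussian in $x$ times a quantity independent of $x$:
$$\exp{-\frac{b+d}{bd}\left(x - \frac{ad+bc}{b+d}\right)^2} \cdot \exp{-\frac{(a-c)^2}{b+d}}.$$
I would pull the $x$-independent factor out of the integral, and then invoke the standard identity $\int_{\Reals}\exp{-\alpha(x-\mu)^2}\diff x = \sqrt{\pi/\alpha}$ with $\alpha = (b+d)/(bd) > 0$, which evaluates the remaining integral to $\sqrt{\pi bd/(b+d)}$. Dividing by $\sqrt{2\pi}$ as required by the statement produces the prefactor $\sqrt{bd/(2(b+d))}$, and combined with the exponential factor this is exactly the right-hand side of the lemma.

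There is no genuine obstacle here: the statement reduces to a direct computation, with the positivity assumption $b,d>0$ exactly what is needed for the Gaussian integral to converge and for the completed-square coefficient $\alpha=(b+d)/(bd)$ to be positive. The only point where one can slip is the algebraic verification that the constant left over from completing the square equals $bd(a-c)^2/(b+d)$; beyond that, everything is bookkeeping.
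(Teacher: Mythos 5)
Your proof is correct and follows essentially the same route as the paper: both reduce the exponent to a single quadratic in $x$ and invoke the standard Gaussian integral identity (the paper quotes $\int \exps{-\alpha x^2+\beta x+\gamma}\diff x=\sqrt{\pi/\alpha}\,\exps{\beta^2/(4\alpha)+\gamma}$, which is just your completion of the square packaged as a formula). Your explicit verification of the residual identity $(da^2+bc^2)(b+d)-(ad+bc)^2=bd(a-c)^2$ is the only detail the paper leaves implicit.
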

\begin{proof}
We write 
\[
\frac{-(x-a)^2}{b}+\frac{-(x-c)^2}{d} = \frac{1}{bd}\left\{(b+d)x^2+(2ad+2bc)x-(a^2d+bc^2)\right\}
\, ,
\]
and then use the well-known identity
\[
\int_{-\infty}^{+\infty} \exps{-\alpha x^2 + \beta x +\gamma} \diff x = \sqrt{\frac{\pi}{\alpha}}\exps{\frac{\beta^2}{4\alpha}+\gamma}
\, .
\]
\end{proof}

For the linear-time statistic, we obtain
\begin{align*}
\expecunder{h(z,z')^2}{z,z'} &= \frac{2\nu}{\sqrt{\nu^2+4}}\left(1+\exp{\frac{-\mu^2}{\nu^2+4}}\right) \\
&+ \frac{2\nu^2}{\nu^2+2} \left(1+\exp{\frac{-\mu^2}{\nu^2+2}}\right) \\
&- \frac{8\nu^2}{\sqrt{(\nu^2+1)(\nu^2+3)}}\exp{\frac{-(\nu^2+2)\mu^2}{2(\nu^2+1)(\nu^2+3)}} \, .
\end{align*}
For the quadratic-time statistic, we obtain
\begin{align*}
\expecunder{\left(\expecunder{h(z,z')}{z'}\right)^2}{z} &= \frac{2\nu^2}{\sqrt{(\nu^2+1)(\nu^2+3)}}\left(1+\exp{\frac{-\mu^2}{\nu^2+3}}\right) \\
&+ \frac{2\nu^2}{\nu^2+2} \left(1+\exp{\frac{-\mu^2}{\nu^2+2}}\right) \\
&- \frac{4\nu^2}{\sqrt{(\nu^2+1)(\nu^2+3)}}\exp{\frac{-(\nu^2+2)\mu^2}{2(\nu^2+1)(\nu^2+3)}} \\
&- \frac{4\nu^2}{\nu^2+2}\exp{\frac{-\mu^2}{2(\nu^2+2)}}
\, .
\end{align*}
In Figure~\ref{fig:ratios-mean}, we plot both $\ratioLin$ and $\ratioQuad$ for various values of the change parameter~$\mu$. 

\begin{figure}[t!]
\centering 
\includegraphics[width=0.49\linewidth]{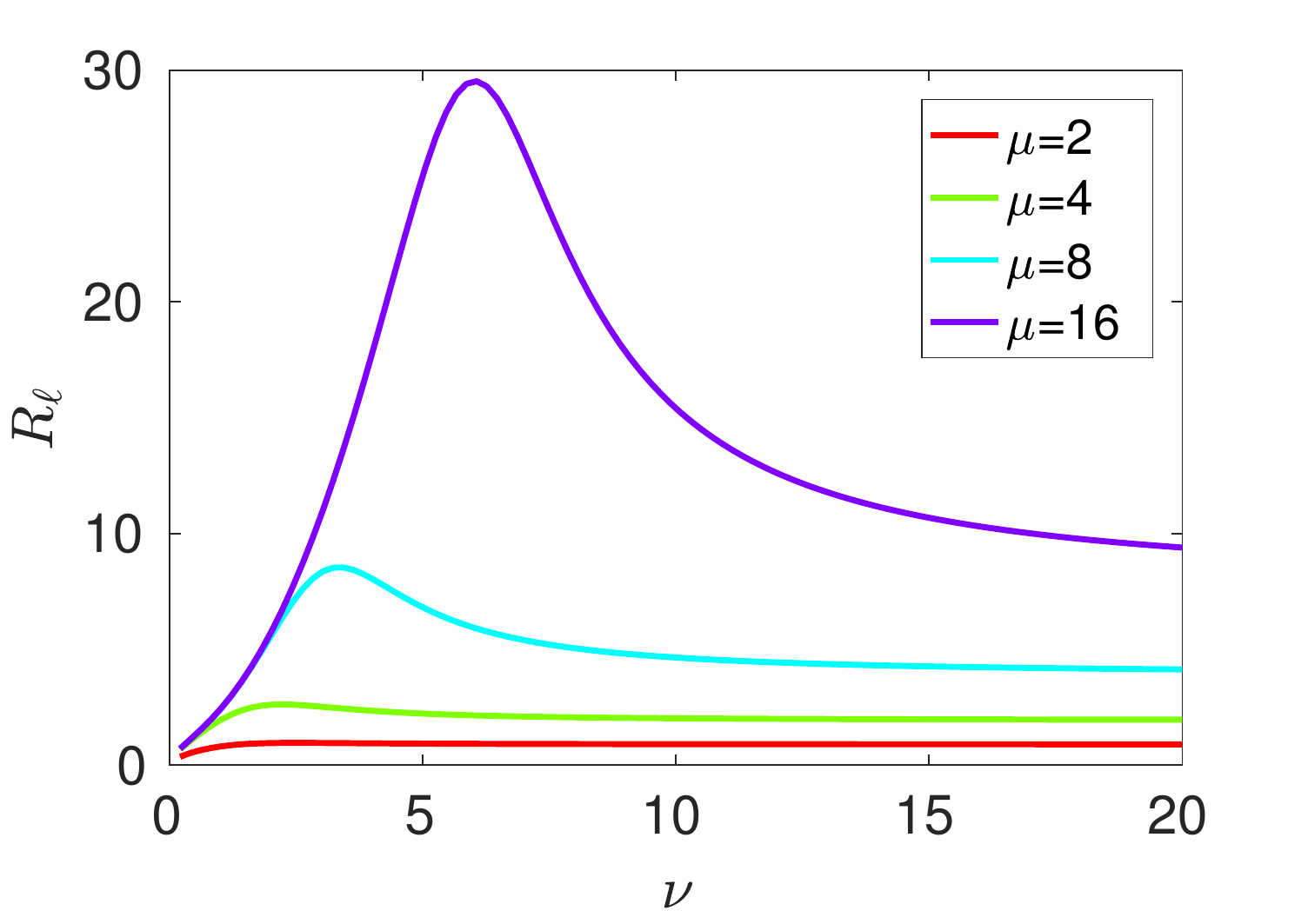}
\hspace{-0.5cm}
\includegraphics[width=0.49\linewidth]{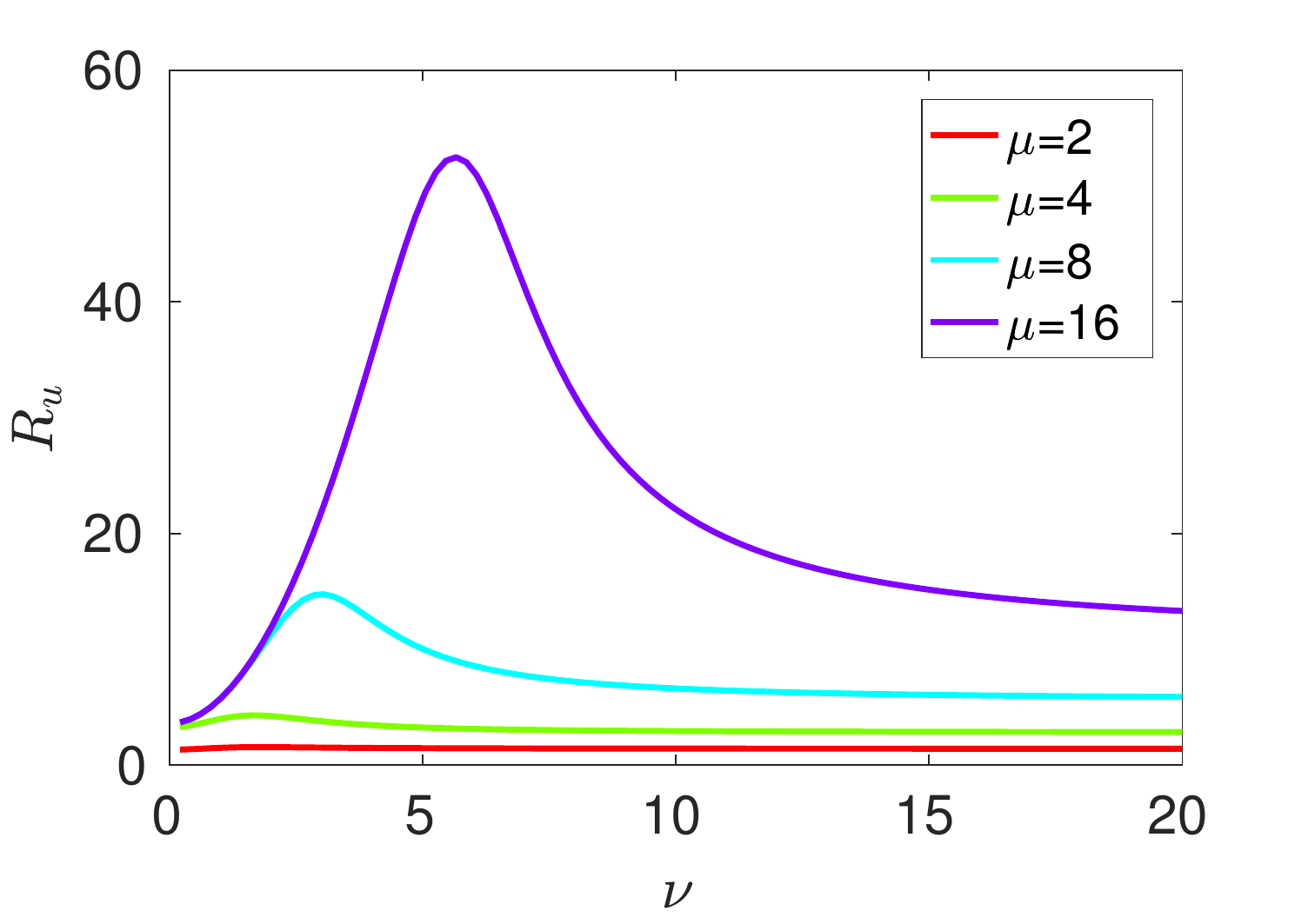}
\caption{\label{fig:ratios-mean}In this figure, we plot $\ratioLin$ (\emph{left panel}) and $\ratioQuad$ (\emph{right panel}) as a function of the bandwidth in the change of mean scenario.  
    Note that both $\ratioLin$ and $\ratioQuad$ are becoming very ``flat'' when $\mu$ goes to~$0$, leading to numerical problems when maximizing with respect to~$\nu$.
}
\end{figure}

\paragraph{Change in the variance.}

In this case, the MMD is given by
\[
\expecunder{h(z,z')}{z,z'} = \nu\left(\frac{1}{\sqrt{\nu^2+2}}+\frac{1}{\sqrt{\nu^2+2\sigma^2}} - \frac{2}{\sqrt{\nu^2+\sigma^2+1}}\right)
\, .
\]
As in the previous paragraph, the second term in Eq.~\eqref{eq:lin-variance} and Eq.~\eqref{eq:quad-variance} is the MMD, which is given by the previous display.
As for the first term, we obtain for the linear-time statistic:
\begin{align*}
\expecunder{h(z,z')^2}{z,z'} &= \frac{\nu}{\sqrt{\nu^2+4}} + \frac{\nu}{\sqrt{\nu^2+4\sigma^2}} + \frac{2\nu}{\sqrt{\nu^2+2\sigma^2+2}} \\
&+ \frac{2\nu^2}{\sqrt{(\nu^2+2)(\nu^2+2\sigma^2)}} - \frac{4\nu^2}{\sqrt{(\nu^2+1)(\nu^2+\sigma^2)+2\nu^2+\sigma^2+1}} \\
&-\frac{4\nu^2}{\sqrt{(\nu^2+1)(\nu^2+\sigma^2)+\sigma^2(2\nu^2+\sigma^2+1)}} + \frac{2\nu^2}{\nu^2+\sigma^2+1}
\, .
\end{align*}
And for the the quadratic-time statistic:
\begin{align*}
\expecunder{\left(\expecunder{h(z,z')}{z'}\right)^2}{z} &= \nu^2 \times \biggl\{\frac{1}{\sqrt{(\nu^2+1)(\nu^2+3)}}+ \frac{1}{\sqrt{(\nu^2+\sigma^2)(\nu^2+3\sigma^2)}}\\
&+\frac{1}{\sqrt{(\nu^2+\sigma^2)(\nu^2+\sigma^2+2)}} + \frac{1}{\sqrt{(\nu^2+1)(\nu^2+2\sigma^2+1)}} \\
&+\frac{2}{\sqrt{(\nu^2+2\sigma^2)(\nu^2+2)}}-\frac{2}{\sqrt{(\nu^2+1)(\nu^2+\sigma^2)+2\nu^2+\sigma^2+1}} \\
&- \frac{2}{\sqrt{(\nu^2+2)(\nu^2+\sigma^2+1)}}-\frac{2}{\sqrt{(\nu^2+\sigma^2+1)(\nu^2+2\sigma^2)}} \\
&-\frac{2}{\sqrt{(\nu^2+1)(\nu^2+\sigma^2)+\sigma^2(2\nu^2+\sigma^2+1)}} + \frac{2}{\nu^2+\sigma^2+1}\biggr\}
\, .
\end{align*}

\begin{figure}[t!]
\centering 
\includegraphics[width=0.49\linewidth]{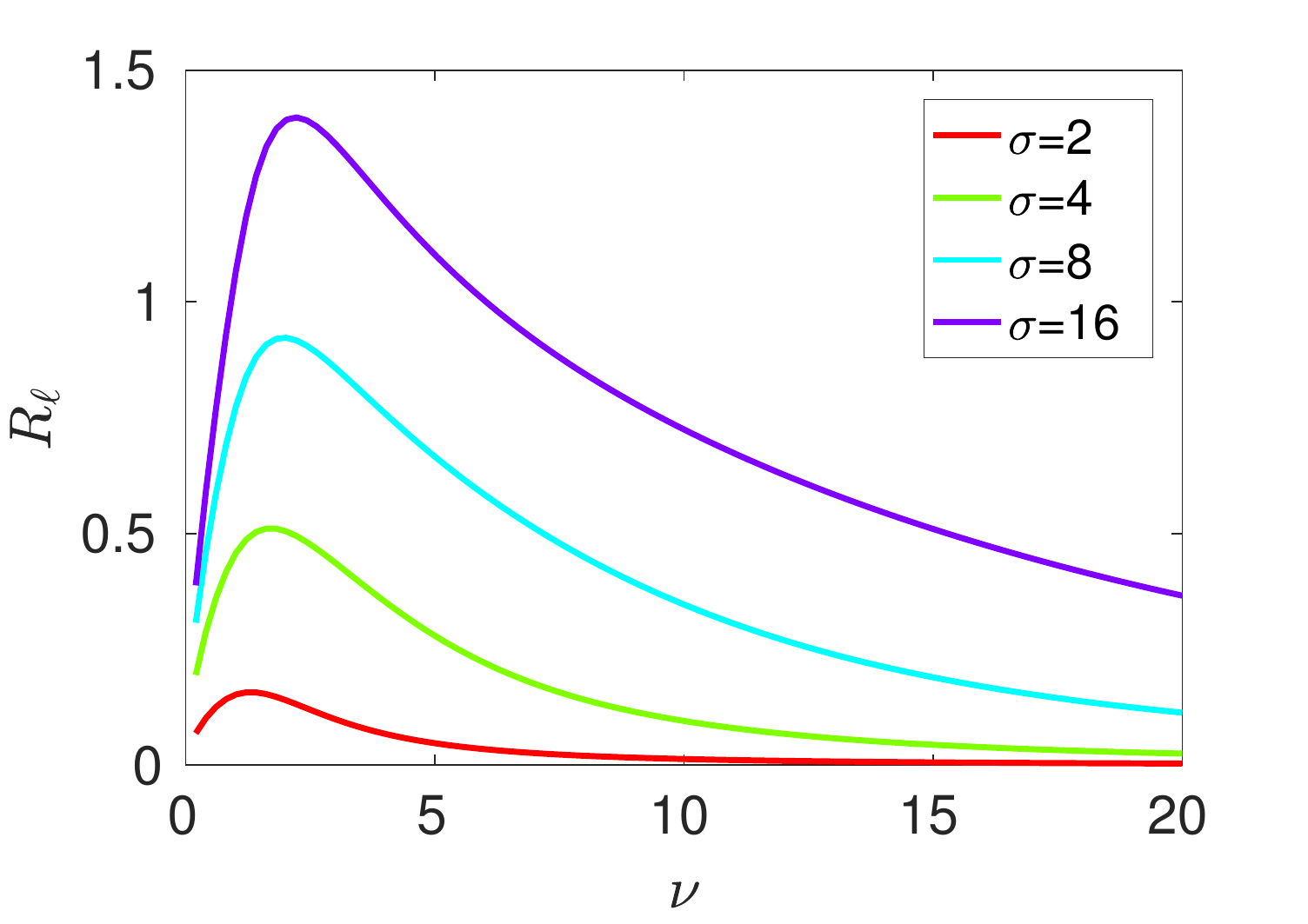}
\hspace{-0.5cm}
\includegraphics[width=0.49\linewidth]{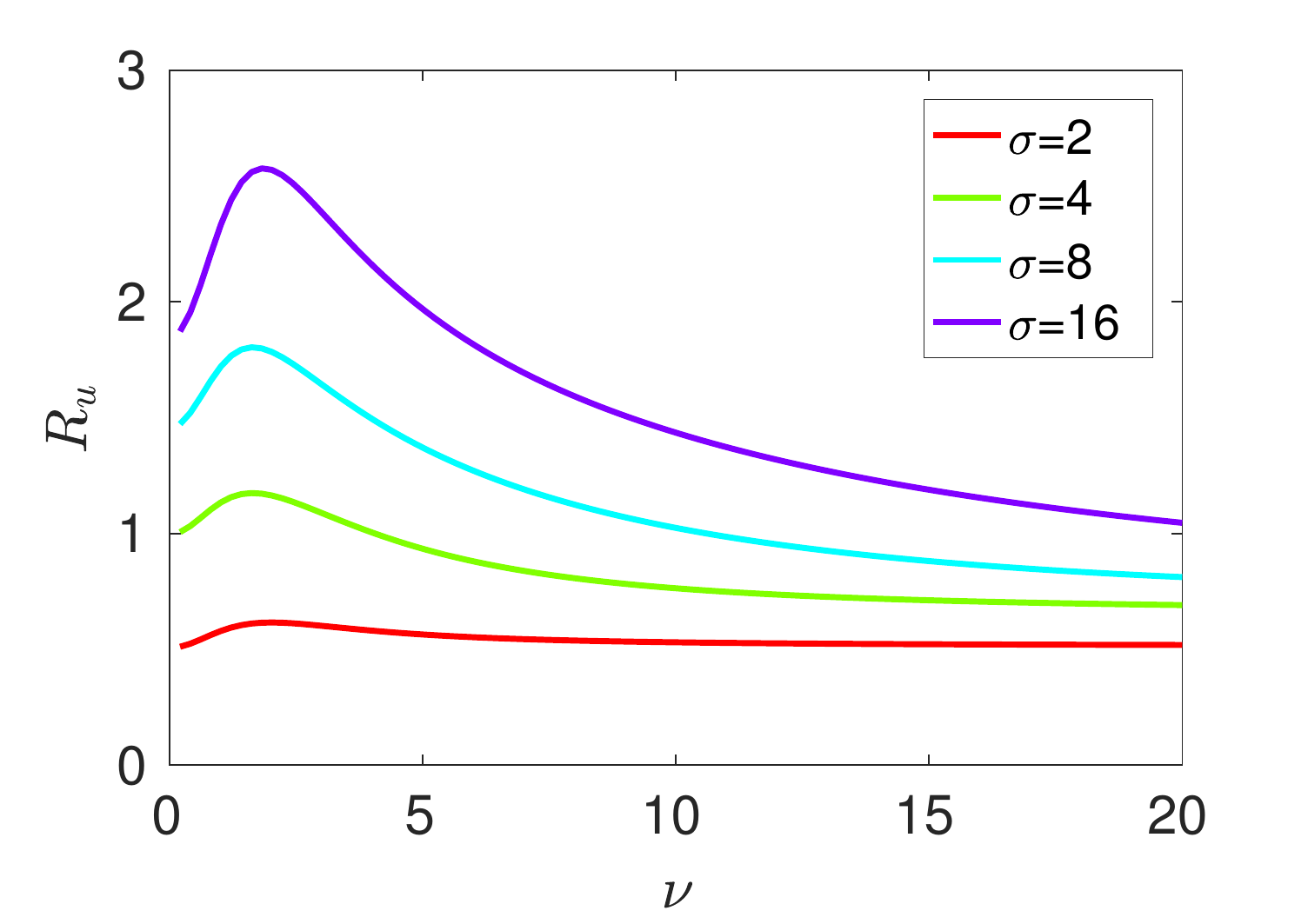}
\caption{\label{fig:ratios-var}In this figure, we plot $\ratioLin$ (\emph{left panel}) and $\ratioQuad$ (\emph{right panel}) as a function of the bandwidth in the change of variance scenario.  
    Note that both $\ratioLin$ and $\ratioQuad$ are becoming very ``flat'' when $\sigma$ goes to~$1$, leading to numerical problems when maximizing with respect to~$\nu$.
    This is similar to the phenomenon observed in the change in the mean scenario when $\mu\to 0$.
}
\end{figure}

\section{Proof of Prop.~\ref{prop:abs-computation}}
\label{sec:proof-abs-computation}

Set $T_n^{u}\defeq n\quadMMD^2$. 
According to Th.~12 in \citet{Gre_Bor_Ras:2012}, with our notations,
\[
n\quadMMD^2 \cvlaw \sum_{\ell=1}^{+\infty} \lambda_{\ell}\left[\left(\sqrt{2}a_{\ell}-\sqrt{2}b_{\ell}\right)^2-4\right]
\, ,
\]
where $a_{\ell},b_{\ell}$ are i.i.d. standard Gaussian random variables and the $\lambda_{\ell}$ are the eigenvalues of the centered Gaussian kernel operator.
We deduce that $n\quadMMD^2 \cvlaw 4\sum_{\ell=1}^{+\infty} \lambda_{\ell}(c_{\ell}^2-1)$, where the $c_{\ell}$ are i.i.d. standard Gaussian random variables. 
Denote by $F$ the cumulative distribution function of this random variables. 
According to \citet{Bah:1960}, the cumulative distribution function of $c_{\ell}^2$ belongs to $\Bclass{1}{1}$.
It follows from Th.~11 in \citet{Jit_Xu_Sza:2017} that $F$ belongs to $\Bclass{\frac{1}{4\lambda_1}}{1}$. 
We now take $R(n)=n$. 
According to Th.~10 in \citet{Gre_Bor_Ras:2012}, $T_n/R(n) = \quadMMD^2 \cvproba \MMD^2$. 
We conclude via Th.~9 in \citet{Jit_Xu_Sza:2017}.

Set $T_n^{\ell}\defeq \sqrt{n}\linMMD^2$. 
According to~\citet[Eq.~5]{Gre_Sej_Str:2012} the limiting distribution of $\sqrt{n}\linMMD$ under the null is $\gaussian{0}{8\sigma_{\ell}^2}$. 
The cumulative distribution function of a standard Gaussian random variable belongs to $\Bclass{1}{2}$ according to \citet{Bah:1960} and we deduce that the cumulative distribution function of $\gaussian{0}{8\sigma_{\ell}^2}$ belongs to $\Bclass{1/(8\sigma_{\ell}^2}{2}$. 
Set $R(n)\defeq \sqrt{n}$.
Since $T_n^{\ell}/R(n)=\linMMD^2$ converges in probability towards the squared MMD, we conclude again  with Th.~9 in \citet{Jit_Xu_Sza:2017}. 
\qed

\end{appendices}

\end{document}